\def \N{\mathbb N}
\journal{Optimization Letters}
\newcommand\Tstrut{\rule{0pt}{2.6ex}}
\newcommand\Bstrut{\rule[-0.9ex]{0pt}{0pt}}
\newcommand{\vect}[1]{\mbox{\boldmath $ #1 $}}
\newcommand{\mymat}[2]{\left[\begin{array}{#1} #2 \end{array}\right]}
\newcommand{\under}[1]{\begin{footnotesize}\text{$#1$}\end{footnotesize}}
\newcommand{\ZZ}{\mathcal{Z}}
\newcommand{\rS}{\mathbb{S}}
\def\norm#1{\left \Vert #1 \right \Vert}
\newcommand{\R}{\mathbb{R}}
\newtheorem{theorem}{Theorem}[section]
\newtheorem{lemma}[theorem]{Lemma}
\newtheorem{proposition}[theorem]{Proposition}
\newtheorem{corollary}[theorem]{Corollary}
\newtheorem{definition}[theorem]{Definition}
\newtheorem{remark}[theorem]{Remark}
\numberwithin{equation}{section}
\newcommand{\ds}{\displaystyle}
\newcommand{\name}[1]{\normalfont\text{(#1)}\quad}
\begin{document}

\begin{frontmatter}



\title{\large {\textbf{\textsf{Exact Conic Programming Reformulations of Two-Stage Adjustable Robust Linear Programs with New Quadratic Decision Rules}}}}


\author{D. Woolnough, V. Jeyakumar\footnote{Corresponding author: V. Jeyakumer, Department of Applied Mathematics, UNSW Sydney, Australia. Email: v.jeyakumar@unsw.edu.au} and G. Li}

\address{Department of Applied Mathematics, UNSW Sydney, Australia}

\address{{\bf Revised Version: \today}}
\begin{abstract}
In this paper we introduce a new parameterized Quadratic Decision Rule (QDR), a generalisation of the commonly employed Affine Decision Rule (ADR), for two-stage linear adjustable robust optimization problems with ellipsoidal uncertainty and show that (affinely parameterized) linear adjustable robust optimization problems with QDRs are numerically tractable by presenting exact  semi-definite program (SDP) and second order cone program (SOCP) reformulations. Under these QDRs, we also establish that exact conic program  reformulations also hold for two-stage linear ARO problems, containing also adjustable variables in their objective functions. We then show via numerical experiments on  lot-sizing problems with uncertain demand that  adjustable robust linear optimization problems with QDRs improve upon the ADRs in their performance both in the worst-case sense and after simulated realization of the uncertain demand relative to the true solution.

\end{abstract}



\begin{keyword}
Adjustable robust optimization \sep semi-definite programs\sep second order cone programs \sep ellipsoidal uncertainty \sep robust linear optimization



\end{keyword}

\end{frontmatter}


\section{Introduction }
Consider the two-stage linear Adjustable Robust Optimization (ARO) problem with an ellipsoidal uncertainty set
\begin{mini}
{\substack{\under{\vect{x},\vect{y}(\cdot)}}}{\vect{c}^T\vect{x}}{}{\name{$P_0$}}
	\addConstraint{A(\vect{z})\vect{x} + B\vect{y}(\vect{z})}{\leq\vect{d}(\vect{z}),\quad}{\forall\vect{z}\in\ZZ}
\end{mini}
where $\ZZ = \left\{\vect{z}\in\R^l : \norm{\vect{z}}^2 \leq r^2, r > 0\right\}$ is the user specified ellipsoidal uncertainty set,  $\vect{x}\in \R^n$ is the first-stage ``here and now"  decision that is made before $\vect{z}\in\R^l$ is realized, $\vect{y}(\vect{z})\in\R^k$ is the second-stage ``wait and see" decision that can be adjusted according to  the actual data; the coefficient matrix $A\in \R^{m\times n}$ and the right hand side vector $\vect{d}\in\R^m$ depend on the uncertainty parameter $\vect{z}$, and the (fixed recourse) coefficient matrix $B=(\vect{b}_1, \ldots, \vect{b}_m)^T, \; \vect{b}_i\in\R^k $ does not depend on $\vect{z}$.

The ARO approach, which employs ARO model problems  of the form $(P_0)$, is  less conservative than the traditional Robust Optimization (RO) methodology, pioneered by Ben-tal et. al \cite{robustbook,7,34,siamRevRobust,Goberna-Jeya-Li15,jeya-optimL}, as  it yields more flexible decisions that can be adjusted according to the realized portion of data at a given stage, and so allows multi-stage decision-making in practical applications \cite{De-tutorial15}. Moreover,  ARO provides optimal objective values that are at least as good as that of the standard RO
approach \cite{robustbook,dendick-18}.

However, the two-stage ARO problem $(P_0)$ is a challenging optimization problem to study, theoretically and numerically  because a linear function is optimized over $\vect{y}(\cdot)$, which are mappings $\vect{y}:\ZZ\rightarrow \R^k$, rather than vectors. It is generally hard to obtain a numerically tractable characterization of the system with a mapping $\vect{y}(\cdot)$ unless the mapping is restricted to satisfy some special rules, called ``decision rules". Traditionally, $\vect{y}(\cdot)$ is assumed to satisfy an Affine Decision Rule (ADR), such as
$\vect{y}(\vect{z})= \vect{y}_0 + W\vect{z}, $
where  $ \vect{y}_0\in \R^k, \;  W\in \R^{k\times l}$ are the coefficients of the decision rule that are to be optimized \cite{robustbook,Chen-09-OR}.

In many cases, affine decision rules, in particular, for the affinely parameterized ARO problems \cite{Ben-tal-MP-04,Survey-19}, often result in  computationally tractable reformulations and have been known to give optimal or near optimal solutions for broad classes of practical problems,, e.g. inventory management \cite{robustbook}. On the other hand, transformations of two-stage ARO problems with nonlinear decision rules to single-stage robust problems often result in hard non-convex optimization problems \cite{robustbook}. Consequently, the study of computational tractability and applicability of these problems with nonlinear decision rules is of great interest in robust optimization.

In this paper we examine {\it affinely parameterized two-stage adjustable robust linear optimization problems} with quadratic decision rules under an ellipsoidal uncertainty set and make the following contributions.
\begin{itemize}
\item[(i)] We introduce a new parameterized Quadratic Decision Rule (QDR), generalizing the commonly employed affine decision rule, and show that affinely parameterized linear ARO problems with QDRs are numerically tractable by presenting exact conic reformulations. In particular, we establish exact second order cone program (SOCP) reformulations for the linear ARO problems under a special separable QDRs.

    We do this by generalizing the approach of \cite{34,robustbook,jeyaChuong} for ADRs and employing the $\mathcal{S}$-lemma \cite{Ben-tal01} and the Schur's complement.
    We further show how exact conic programming reformulations can be derived from our results for ARO problems with adjustable variables also in their objective functions as they appear in many practical decision-making models of optimization, such as the lot-sizing problem with uncertain demand.

    Various  nonlinear decision rules, such as the homogeneous \cite{arxiv} and non-homogeneous quadratic decision rules \cite{thesis,robustbook}, and polynomial decision rules \cite{bert}, have also recently been used to approximate and reformulate ARO problems. Our results readily yield corresponding exact conic program reformulations for affinely parameterized linear ARO problems \cite{Ben-tal-MP-04} with affine decision rules and homogeneous as well as non-homogeneous quadratic decision rules.

    \item[(ii)] We employ our SDP and SOCP reformulations to solve the lot-sizing problem with uncertain demand and present a comparison of our techniques in their performance by contrasting their optimal solutions both in the worst-case sense and after simulated realisations of the uncertain demand. Numerical experiments on lot-sizing problems demonstrate that the quadratic decision rule outperforms affine decision rules in both cases, whilst the time taken to solve problems with quadratic decision rules is significantly greater (due to the larger number of variables) than the ones with affine decision rules.

        \end{itemize}


\medskip

In section 2 we present the parameterized quadratic decision rule, an extension of the affine decision rule, and present exact SDP and SOCP reformulations for two-stage ARO problems. In section 3, we derive exact conic programming reformulations for ARO problems with adjustable variables also in their objective functions. In section 4, we employ our reformulation schemes to solve the lot-sizing problem and show that it is both consistent with the ADR and improves upon it. In section 5 we present concluding remarks with a brief discussion on further research.
\medskip

\section{Quadratic Decision Rules \& Exact Conic Program Reformulations}

We begin by fixing some preliminaries. The notation $\R^n$ signifies  the Euclidean  space for each $n\in\N:=\{1,2,\ldots\}$ and $\rS_l$ is the space of all real $l \times l$ symmetric matrices. As usual, the symbol $I_n$ stands for the identity $(n\times n)$ matrix, while  $\R_+:=[0,+\infty)\subset \R.$ The inner product in $\R^n$ is defined by $\langle x,y\rangle:=x^T y$ for all $x, y\in\R^n.$  A symmetric $(n\times n)$  matrix $A$ is said to be  positive semi-definite, denoted by $A\succeq 0$, whenever $x^T Ax\ge 0$ for all $x\in\R^n.$
\medskip

In this section, we present numerically tractable conic linear program reformulations of the affinely adjustable case of the two-stage robust linear optimization problem $(P_0)$ under a  parameterised quadratic decision rule (QDR) which is defined as follows:
\begin{definition}[{\bf Quadratic Decision Rule}] Let $\theta\in [0,1]$. The ARO problem $(P_0)$ is said to satisfy the parameterized quadratic decision rule whenever the mapping $\vect{y}(\cdot)$ is restricted to mappings of the form
\[
\vect{y}(\vect{z}) = \theta(\vect{y}_0 + W\vect{z}) + (1-\theta) \mymat{c}{\vect{z}^TQ_1\vect{z} \\ \vect{z}^T Q_2\vect{z} \\ \vdots \\ \vect{z}^T Q_k \vect{z}}.
\]
\end{definition}
We define the following operator to simplify working:
\[
\vect{z}^T \mathcal{Q}_k \vect{z} = \mymat{c}{\vect{z}^T Q_1 \vect{z} \\ \vdots \\ \vect{z}^T Q_k \vect{z}}
\]
so that our QDR is $\vect{y}(\vect{z}) = \theta(\vect{y}_0 + W\vect{z}) + (1-\theta)\vect{z}^T\mathcal{Q}_k \vect{z}$.

\medskip


\noindent\textbf{QDRs and SDP Reformulations}. Consider the following affinely parameterized version of ARO problem $(P_0)$ with the parameterized QDR,
\begin{mini}
	{\substack{\under{\vect{x},\vect{y}_0},\\W,Q_j,j=1,\dots,k}}{\vect{c}^T\vect{x}}{}{(P)}
	\addConstraint{A(\vect{z})\vect{x} + B\left(\theta(\vect{y}_0 + W\vect{z}) + (1-\theta)\vect{z}^T \mathcal{Q}_k \vect{z}\right)}{\leq\vect{d}(\vect{z}),\quad}{\forall\vect{z}\in\ZZ,}
\end{mini}
where $\ZZ = \left\{\vect{z}\in\R^l : \norm{\vect{z}}^2 \leq r^2\right\}$ is ellipsoidal uncertainty set; $\vect{c}\in\R^n$; $B=(\vect{b}_1, \ldots, \vect{b}_m)^T; \vect{b}\in\R^k $;
$A(\vect{z}) = (\vect{a}_1+A_1\vect{z}, \dots, \vect{a}_m + A_m\vect{z})^T,\; \vect{a}_i\in\R^{n},\; A_i\in\R^{n\times l}$, $\vect{d}(\vect{z}) = (d_{0,1}+\vect{d}_1^T\vect{z},\dots,d_{0,m} + \vect{d}_m^T\vect{z})^T,\; d_{0,i}\in\R$, $\vect{d}_i\in\R^{l}$ and $\theta \in [0, 1]$.

We associate with ($P$) the following semi-definite program

\begingroup
\begin{mini*}
{\substack{\under{\vect{x},\vect{y}_0, \vect{\lambda}},\\W,Q_j,j=1,\dots,k}}{\vect{c}^T\vect{x}}{}{(P-QDR)}
	\addConstraint{ \lambda_i\geq 0, \; i=1,\dots,m}, \mymat{ccc}{
	P_1 & &  \\
	& \ddots \\
	& & P_m
	}\succeq 0,
\end{mini*}
where $\vect{x}\in\R^{n},\;\vect{y}_0\in\R^{k},\;W\in\R^{k\times l},\; Q_j\in\rS_l,j=1,\dots,k$ and
\[
P_i = \mymat{cc}{d_{0,i}-\vect{a}_i^T\vect{x}-\theta\vect{b}_i^T\vect{y}_0-\lambda_i r^2 & \ds\frac{1}{2}(\vect{d}_i^T-\vect{x}^T A_i-\theta\vect{b}_i^T W) \\ \ds\frac{1}{2}(\vect{d}_i^T-\vect{x}^T A_i-\theta\vect{b}_i^T W)^T & \lambda_i I_l -(1-\theta)\ds\sum_{j=1}^k{(\vect{b}_i)_j Q_j}},\quad i = 1,\dots,m.
\]
\endgroup

We first show that the problem (P) admits an exact SDP reformulation in the sense that the objective values of (P) and (P-QDR) are equal and their constraint systems are equivalent.
To do this, we first recall the celebrated $\mathcal{S}$-Lemma \cite{Ben-tal01} which is a useful tool in nonconvex quadratic optimization.
\begin{lemma}[$\mathcal{S}$-Lemma]
Let $A$, $B$ be two symmetric matrices such that there exists a $\vect{z}_0$ such that $\vect{z}_0^T A \vect{z}_0> 0$. Then,  
\[
\vect{z}^T A \vect{z}\geq 0 \implies \vect{z}^T B \vect{z}\geq 0
\]
holds true if and only if
\[
\exists\lambda\geq 0 : B-\lambda A\succeq 0.
\]
\end{lemma}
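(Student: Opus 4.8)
The \textbf{``if'' direction} is immediate and I would dispatch it first: if $B-\lambda A\succeq 0$ for some $\lambda\ge 0$, then for any $\vect{z}$ with $\vect{z}^T A\vect{z}\ge 0$ we have $\vect{z}^T B\vect{z} = \vect{z}^T(B-\lambda A)\vect{z} + \lambda\,\vect{z}^T A\vect{z}\ge 0$. So all the content is in the ``only if'' direction, and the plan is to prove it by a convex separation argument carried out in $\R^2$.

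Introduce the joint numerical range $\mathcal{R}=\{(\vect{z}^T A\vect{z},\,\vect{z}^T B\vect{z}):\vect{z}\in\R^n\}\subseteq\R^2$. The hypothesis ``$\vect{z}^T A\vect{z}\ge 0\Rightarrow\vect{z}^T B\vect{z}\ge 0$'' says exactly that $\mathcal{R}$ is disjoint from the open convex cone $C=\{(u,v):u\ge 0,\ v<0\}$. The key structural fact I would invoke is \emph{Dines's theorem}: $\mathcal{R}$ is a convex cone. (It is visibly a cone, since replacing $\vect{z}$ by $t\vect{z}$ scales both coordinates by $t^2$; convexity is the nontrivial part, and I would either cite it or prove it separately — see the last paragraph.) Granting this, $\mathcal{R}$ and $C$ are disjoint convex sets with $C$ open, so by the separating hyperplane theorem there is a nonzero $(\mu,\nu)\in\R^2$ with $\mu u+\nu v\ge 0$ for all $(u,v)\in\mathcal{R}$ and $\mu u+\nu v\le 0$ for all $(u,v)\in\overline{C}=\{u\ge 0,\ v\le 0\}$ — the separating value is forced to be $0$ because $(0,0)$ lies in both $\mathcal{R}$ and $\overline{C}$.

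Now I read off the signs. Testing the second inequality at $(u,v)=(1,0)$ gives $\mu\le 0$, and at $(u,v)=(0,-1)$ gives $\nu\ge 0$. The first inequality says $\mu A+\nu B\succeq 0$. Here I would use the Slater-type assumption $\vect{z}_0^T A\vect{z}_0>0$ to rule out $\nu=0$: if $\nu=0$ then $\mu\neq 0$, hence $\mu<0$, and $\mu A\succeq 0$ forces $A\preceq 0$, contradicting $\vect{z}_0^T A\vect{z}_0>0$. Therefore $\nu>0$; dividing $\mu A+\nu B\succeq 0$ by $\nu$ yields $B-\lambda A\succeq 0$ with $\lambda:=-\mu/\nu\ge 0$, which is the desired conclusion.

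The main obstacle is the convexity of $\mathcal{R}$ (Dines's theorem); once that is in hand, the rest is bookkeeping with signs, the separation theorem, and the Slater point. Since the paper only \emph{recalls} this lemma from \cite{Ben-tal01}, I would simply cite it there. If a self-contained proof were wanted, I would establish Dines convexity by the standard reduction: the image of the positive semidefinite cone under the linear map $Z\mapsto(\mathrm{tr}(AZ),\mathrm{tr}(BZ))$ is convex, and it coincides with $\mathcal{R}$ by a rank-one reduction (any feasible $Z$ can be replaced by a rank-one $\vect{z}\vect{z}^T$ matching both linear functionals, since only two constraints are involved). One should also keep in mind the mild technical point that $\mathcal{R}$ need not be closed — which is precisely why the separation is stated with non-strict inequalities and $C$ is taken open — but this does not affect the argument.
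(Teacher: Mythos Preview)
Your proof is correct and is in fact the standard argument (Dines convexity of the joint numerical range followed by convex separation in $\R^2$, with the Slater point $\vect{z}_0$ used to force $\nu>0$). However, there is nothing to compare it against: the paper does not prove this lemma at all --- it simply recalls the statement and cites \cite{Ben-tal01}. As you yourself anticipated in the final paragraph, the appropriate move in the context of this paper is just to cite the result; your self-contained argument would only be needed if the authors had chosen to make the paper independent of that reference, which they did not.
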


The following Theorem provides an exact SDP reformulation result for the linear ARO problem (P).
\begin{theorem}[{\bf General QDRs and Exact SDP Reformulations}] \label{thm:1}
Let $\theta \in [0, 1]$. Consider the linear ARO problem (P) with the parameterized quadratic decision rule and its associated semi-definite program (P-QDR). Then, problem (P) and the semi-definite program (P-QDR) are equivalent, in
the sense that, $(\vect{x},\vect{y}_0,W,Q_1,\ldots,Q_k)$  is a solution for (P) if and only if there exists $\vect{\lambda} \in \mathbb{R}^m_+$ such that
$(\vect{x},\vect{y}_0,\vect{\lambda},W,Q_1,\ldots,Q_k)$  is a solution for (P-QDR). Moreover,
$
\min{\text{\emph{(P)}}} = \min{\text{\emph{(P-QDR)}}}.
$


\end{theorem}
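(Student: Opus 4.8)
The plan is to show that the semi-infinite constraint system of (P) is, row by row, equivalent to the positive semi-definiteness of the block-diagonal matrix in (P-QDR). Fix $i \in \{1,\dots,m\}$ and look at the $i$-th constraint of (P), namely $(\vect{a}_i + A_i\vect{z})^T\vect{x} + \vect{b}_i^T\big(\theta(\vect{y}_0 + W\vect{z}) + (1-\theta)\vect{z}^T\mathcal{Q}_k\vect{z}\big) \le d_{0,i} + \vect{d}_i^T\vect{z}$ for all $\vect{z}$ with $\norm{\vect{z}}^2 \le r^2$. First I would expand $\vect{b}_i^T(\vect{z}^T\mathcal{Q}_k\vect{z}) = \sum_{j=1}^k (\vect{b}_i)_j\, \vect{z}^TQ_j\vect{z} = \vect{z}^T\big(\sum_{j=1}^k (\vect{b}_i)_j Q_j\big)\vect{z}$, so that the left-hand side minus the right-hand side becomes a genuine quadratic-plus-linear-plus-constant function of $\vect{z}$: it equals $-\,\vect{z}^T M_i \vect{z} + \vect{p}_i^T\vect{z} + q_i$ where $M_i := (1-\theta)\sum_{j=1}^k (\vect{b}_i)_j Q_j$, $\vect{p}_i := \vect{d}_i - A_i^T\vect{x} - \theta W^T\vect{b}_i$, and $q_i := d_{0,i} - \vect{a}_i^T\vect{x} - \theta\vect{b}_i^T\vect{y}_0$. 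So the $i$-th constraint of (P) says precisely: $\norm{\vect{z}}^2 \le r^2 \implies \vect{z}^T M_i\vect{z} - \vect{p}_i^T\vect{z} - q_i \le 0$.

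Next I would homogenize both the hypothesis and the conclusion by introducing an auxiliary scalar $t$ and the vector $\tilde{\vect{z}} = (\vect{z},t) \in \R^{l+1}$, writing the implication in the form $\tilde{\vect{z}}^T \tilde A \tilde{\vect{z}} \ge 0 \implies \tilde{\vect{z}}^T \tilde B \tilde{\vect{z}} \ge 0$, where $\tilde A = \operatorname{diag}(-I_l, r^2)$ encodes $r^2t^2 - \norm{\vect{z}}^2 \ge 0$ and $\tilde B$ is the $(l+1)\times(l+1)$ symmetric matrix whose blocks are $-M_i$, $\tfrac12\vect{p}_i$, and $q_i$ corresponding to $-\vect{z}^TM_i\vect{z} + \vect{p}_i^T\vect{z}t + q_it^2 \ge 0$; one checks that the original (non-homogeneous) statement is recovered on the slice $t=1$, and that the $t=0$ directions are harmless because they are already implied. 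The Slater-type hypothesis of the $\mathcal{S}$-Lemma is met by, e.g., $\tilde{\vect{z}}_0 = (\vect{0},1)$, since $\tilde{\vect{z}}_0^T\tilde A\tilde{\vect{z}}_0 = r^2 > 0$. Applying the $\mathcal{S}$-Lemma, the $i$-th constraint of (P) holds if and only if there exists $\lambda_i \ge 0$ with $\tilde B - \lambda_i \tilde A \succeq 0$; unpacking the blocks, $\tilde B - \lambda_i \tilde A$ is exactly (a symmetric relabelling of) the matrix $P_i$ written in (P-QDR), with the $(1,1)$ entry $q_i - \lambda_i r^2 = d_{0,i} - \vect{a}_i^T\vect{x} - \theta\vect{b}_i^T\vect{y}_0 - \lambda_i r^2$, the off-diagonal block $\tfrac12(\vect{d}_i^T - \vect{x}^TA_i - \theta\vect{b}_i^TW)$, and the lower-right block $\lambda_i I_l - (1-\theta)\sum_{j=1}^k (\vect{b}_i)_j Q_j$. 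Since the full block-diagonal matrix in (P-QDR) is positive semi-definite if and only if each $P_i \succeq 0$, the constraint system of (P-QDR) is satisfied (for a suitable choice of $\vect{\lambda}$) if and only if that of (P) is.

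Having established that the feasible $(\vect{x},\vect{y}_0,W,Q_1,\dots,Q_k)$ of (P) are exactly the projections onto those coordinates of the feasible points of (P-QDR), and since the two problems share the same objective $\vect{c}^T\vect{x}$ which does not involve $\vect{\lambda}$, I would conclude that $(\vect{x},\vect{y}_0,W,Q_1,\dots,Q_k)$ is optimal for (P) if and only if it is extendable by some $\vect{\lambda}\in\R^m_+$ to an optimal point of (P-QDR), and that $\min(\text{P}) = \min(\text{P-QDR})$. The only delicate point is the homogenization bookkeeping: one must verify that feasibility of the homogeneous quadratic implication over all of $\R^{l+1}$ is genuinely equivalent to the non-homogeneous implication over $\vect{z}\in\ZZ$ — that introducing $t$ does not lose or gain solutions — which is the standard device used in \cite{34,robustbook,jeyaChuong} but is the step most worth stating carefully; the rest is substitution and the $\mathcal{S}$-Lemma applied blockwise.
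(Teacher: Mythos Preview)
Your proposal is correct and follows essentially the same approach as the paper: decompose the constraint system row by row, rewrite each row as a quadratic implication in $\vect{z}$, homogenize via an auxiliary scalar, apply the $\mathcal{S}$-Lemma (with the same Slater point), and read off $P_i\succeq 0$; then use the block-diagonal structure and the shared objective to conclude. The only cosmetic difference is that you put the auxiliary scalar last in $\tilde{\vect{z}}=(\vect{z},t)$ whereas the paper uses $\vect{u}=(1,\vect{z})$, and you explicitly flag the homogenization equivalence over all of $\R^{l+1}$ as the point deserving care, which the paper states without further comment.
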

\begin{proof} The constraint system of (P)
\[
A(\vect{z})\vect{x} + B\left(\theta(\vect{y}_0 + W\vect{z}) + (1-\theta)\vect{z}^T \mathcal{Q}_k\vect{z}\right)\leq\vect{d}(\vect{z}),\quad \forall\vect{z}\in\ZZ
\]
is equivalently re-written as the following semi-infinite system of $m$ constraints:
\begin{equation}\label{eq:mconstraintsA}
 (\vect{a}_i + A_i\vect{z})^T\vect{x} + \vect{b}_i^T\left(\theta(\vect{y}_0 + W\vect{z}) + (1-\theta)\vect{z}^T \mathcal{Q}_k \vect{z}\right) \leq d_{0,i} + \vect{d}_i^T\vect{z},\;\forall\vect{z}\in\ZZ, \; i=1,2,\ldots, m
\end{equation}
For each $i=1,2,\ldots, m$, we claim that the system
\[
(\vect{a}_i + A_i\vect{z})^T\vect{x} + \vect{b}_i^T\left(\theta(\vect{y}_0 + W\vect{z}) + (1-\theta)\vect{z}^T \mathcal{Q}_k \vect{z}\right) \leq d_{0,i} + \vect{d}_i^T\vect{z},\;\forall\vect{z}\in\ZZ
\]
is equivalent to the liner matrix inequality:
\begin{equation}\label{eq:LMI}
 \exists \lambda_i \geq 0,  \; \;  \; \;
\mymat{cc}{d_{0,i}-\vect{a}_i^T\vect{x}-\theta\vect{b}_i^T\vect{y}_0-\lambda_i r^2 & \ds\frac{1}{2}(\vect{d}_i^T-\vect{x}^T A_i-\theta\vect{b}_i^T W) \\ \ds\frac{1}{2}(\vect{d}_i^T-\vect{x}^T A_i-\theta\vect{b}_i^T W)^T & \lambda_i I_l -(1-\theta)\ds\sum_{j=1}^k{(\vect{b}_i)_j Q_j}}\succeq 0.
\end{equation}
Granting this, we obtain that  $(\vect{x},\vect{y}_0,W,Q_j) \in \mathbb{R}^n \times \mathbb{R}^{k} \times \mathbb{R}^{k \times l} \times \mathbb{S}^l$, $j=1,\dots,k$, satisfies the system of constraints in \eqref{eq:mconstraintsA} if and only if
there exists $\vect{\lambda} \in \mathbb{R}^m_+$ such that $(\vect{x},\vect{y}_0,\vect{\lambda},W,Q_j) \in \mathbb{R}^n \times \mathbb{R}^{k} \times \mathbb{R}^m \times \mathbb{R}^{k \times l} \times \mathbb{S}^l$
satisfies  the semi-definite constraint system of (P-QDR).  As the objective functions of both problems (P) and (P-QDR) are the same, we see that
problem (P) and the semi-definite program (P-QDR) are equivalent and
 $\min{\text{(P)}} = \min{\text{(P-QDR)}}$. Then, the conclusion of this theorem follows.

We now turn to the proof of the claim. Fix $i \in \{1,\ldots,m\}$.
Then,
\begingroup
\allowdisplaybreaks
\begin{eqnarray*}
 & & (\vect{a}_i + A_i\vect{z})^T\vect{x} + \vect{b}_i^T\left(\theta(\vect{y}_0 + W\vect{z}) + (1-\theta)\vect{z}^T \mathcal{Q}_k \vect{z}\right) \leq d_{0,i} + \vect{d}_i^T\vect{z},\;\forall\vect{z}\in\ZZ \\
& \iff & (\vect{a}_i + A_i\vect{z})^T\vect{x} + \vect{b}_i^T\left(\theta(\vect{y}_0 + W\vect{z}) + (1-\theta)\mymat{c}{\vect{z}^TQ_1\vect{z} \\ \vdots \\ \vect{z}^T Q_k \vect{z}}\right) \leq d_{0,i} + \vect{d}_i^T\vect{z},\quad\forall\vect{z}\in\ZZ \\
&	\iff & (d_{0,i}-\vect{a}_i^T\vect{x}-\theta\vect{b}_i^T\vect{y}_0) + \left(\vect{d}_i^T-\vect{x}^T A_i-\theta\vect{b}_i^T W\right)\vect{z}- \vect{z}^T\left((1-\theta)\ds\sum_{j=1}^k{(\vect{b}_i)_j Q_j}\right)\vect{z}\geq 0,  \forall\vect{z}\in\ZZ
\end{eqnarray*}
\endgroup
which is, in turn, equivalent to the implication:
\begin{align}
\begin{split}
r^2-\vect{z}^T\vect{z}\geq 0 \implies (d_{0,i}-\vect{a}_i^T\vect{x}-\theta\vect{b}_i^T\vect{y}_0) + \left(\vect{d}_i^T-\vect{x}^T A_i-\theta\vect{b}_i^T W\right)\vect{z} \\
 -\vect{z}^T\left((1-\theta)\ds\sum_{j=1}^k{(\vect{b}_i)_j Q_j}\right)\vect{z}\geq 0.
\end{split}
\end{align}
Letting $\vect{u} = \mymat{c}{1\\\vect{z}}$ we can write the above implication  as
\[
\vect{u}^T P \vect{u}\geq 0 \implies \vect{u}^T R_i \vect{u}\geq 0,
\]
where
\[
P = \mymat{cc}{r^2 & 0\\ 0 & -I_l},\quad R_i =
\mymat{cc}{d_{0,i}-\vect{a}_i^T\vect{x}-\theta\vect{b}_i^T\vect{y}_0 & \ds\frac{1}{2}(\vect{d}_i^T-\vect{x}^T A_i-\theta\vect{b}_i^T W) \\
\ds\frac{1}{2}(\vect{d}_i^T-\vect{x}^T A_i-\theta\vect{b}_i^T W)^T &  -(1-\theta)\ds\sum_{j=1}^k{(\vect{b}_i)_j Q_j}}.
\]
Clearly $P$ and $R$ are symmetric matrices. If we choose $\vect{u}_0 = \mymat{cc}{1 & \vect{0}}^T$ then $\vect{u}_0^T P \vect{u}_0 = r^2 > 0$ and so the $\mathcal{S}$-Lemma \cite{Ben-tal01} applies. Hence, \eqref{eq:mconstraintsA} is equivalent to the linear matrix inequality:
\begingroup
\allowdisplaybreaks
\begin{align*}
& \exists\lambda_i \geq 0 : R_i-\lambda_i P\succeq 0 \\
& \iff \lambda_i\geq 0, \mymat{cc}{d_{0,i}-\vect{a}_i^T\vect{x}-\theta\vect{b}_i^T\vect{y}_0-\lambda_i r^2 & \ds\frac{1}{2}(\vect{d}_i^T-\vect{x}^T A_i-\theta\vect{b}_i^T W) \\ \ds\frac{1}{2}(\vect{d}_i^T-\vect{x}^T A_i-\theta\vect{b}_i^T W)^T & \lambda_i I_l -(1-\theta)\ds\sum_{j=1}^k{(\vect{b}_i)_j Q_j}}\succeq 0.
\end{align*}
\endgroup
Thus, the claim follows.
\end{proof}
\begin{remark}[{\bf Exact SDPs for affine \& and other known quadratic decision rules}] It is worth noting that Theorem 2.3. readily yields exact SDP reformulations for linear ARO problems with affine decision rules \cite{robustbook} by setting $\theta =1$, homogeneous quadratic decision rules \cite{arxiv} by setting $\theta =0$ and with non homogeneous quadratic decision rules \cite{thesis} by setting $\theta =\frac{1}{2}$.
\end{remark}
\medskip

\noindent\textbf{Separable QDRs and SOCP Reformulations}. We now show that, if we consider a restricted version of the quadratic decision rule (see Definition 2.1), then the ADR problem can be equivalently reformulated as a second order cone programming problem. Second order cone programming reformulations for classes of nonconvex quadratic optimization problems and robust optimization problems have been of great interest in recent years   \cite{bental1,JL1}. This is because the second order cone programming method has proved to be a powerful scheme for solving various class of practical optimization problems and advanced commercial software is available to solve SOCPs.

\begin{definition}[{\bf Separable Quadratic Decision Rule}]\label{def2} Let $\theta\in [0,1]$. The ARO problem $(P_0)$ is said to satisfy the parameterized separable quadratic decision rule whenever the mapping $\vect{y}(\cdot)$ is restricted to mappings of the form
\[
\vect{y}(\vect{z}) = \theta(\vect{y}_0 + W\vect{z}) + (1-\theta) \mymat{c}{\vect{z}^TQ_1\vect{z} \\ \vect{z}^T Q_2\vect{z} \\ \vdots \\ \vect{z}^T Q_k \vect{z}}=\theta(\vect{y}_0 + W\vect{z}) + (1-\theta) \mymat{c}{\ds\sum_{p=1}^l q_{1,p}z_p^2 \\ \ds \sum_{p=1}^l q_{2,p}z_p^2 \\ \vdots \\ \ds \sum_{p=1}^l q_{k,p}z_p^2 }, 
\]
where $Q_j$, $j=1,\ldots,k$, are diagonal matrices whose diagonal elements are $q_{1,j},\ldots,q_{l,j}$.

\end{definition}

We now consider the following affinely parameterized version of ARO problem $(P_0)$ with the separable quadratic decision rule:

\begin{mini}
	{\substack{\under{\vect{x},\vect{y}_0},\\W,Q_j,j=1,\dots,k}}{\vect{c}^T\vect{x}}{}{\name{$P_s$}}
	\addConstraint{A(\vect{z})\vect{x} + B\left(\theta(\vect{y}_0 + W\vect{z}) + (1-\theta)\vect{z}^T \mathcal{Q}_k \vect{z}\right)}{\leq\vect{d}(\vect{z}),\quad}{\forall\vect{z}\in\ZZ,}
\end{mini}
where  $\vect{z}^T \mathcal{Q}_k \vect{z} = \mymat{c}{\vect{z}^T Q_1 \vect{z} \\ \vdots \\ \vect{z}^T Q_k \vect{z}}$ 
and each $Q_j$, $j=1,\ldots,k$, is a diagonal matrix whose diagonal elements are $q_{1,j},\ldots,q_{l,j}$. Other assumptions on $(P_s)$ are the same as on $(P)$.

To do this, we first show that, using a linear transform and Schur's complement, the constraints of ($P_s$) (with separable quadratic decision rule) can be characterized in terms of second
order cone constraints.
\begin{proposition}[{\bf Equivalent Second-order Cone Constraints}] \label{lemma:2.3}
Let $\theta \in [0, 1]$; let $\vect{a}\in \R^n, \; A\in \R^{n\times l}, \; \vect{b}\in \R^k, \; \vect{d}\in\R^l, \; d_0\in\R$; let $\vect{x}\in\R^{n},\;\vect{y}_0\in\R^{k},\; W\in\R^{k\times l},\; Q_j\in\rS_l,\; j=1,\dots,k$.
Let $\ZZ$ be an ellipsoidal uncertainty set, defined by $\ZZ = \left\{\vect{z}\in \R^l : \norm{\vect{z}}^2\leq r^2\right\}$.
Suppose that each $Q_j$, $j=1,\ldots,k$, is a diagonal matrix whose diagonal elements are $q_{1,j},\ldots,q_{l,j}$.
Then, the following systems are equivalent:
\begin{enumerate}
\item[{\rm (I)}] $
(\vect{a} + A\vect{z})^T\vect{x} + \vect{b}^T\left(\theta(\vect{y}_0 + W\vect{z}) + (1-\theta)\vect{z}^T \mathcal{Q}_k \vect{z}\right) \leq d_0 + \vect{d}^T\vect{z},\quad\forall\vect{z}\in\ZZ
$
\item[{\rm (II)}] There exist $\lambda \in \R_+$  and $s_p \in \R_+$, $p=1,\ldots,l$, such that
\[
\left\{ \begin{array}{l}
         \displaystyle \sum_{p=1}^l s_p \le d_0-\vect{a}^T\vect{x}-\theta\vect{b}^T\vect{y}_0-\lambda r^2, \\
         \lambda  -(1-\theta)\sigma_p \ge 0, \ \ \ p=1,\ldots,l, \\
          \left\|\left((\vect{d}-A^T\vect{x} -\theta W^T\vect{b})_p,s_p-\lambda  +(1-\theta)\sigma_p  \right) \right)\| \le s_p +\lambda  -(1-\theta)\sigma_p,\ \   \ p=1,\ldots,l.
        \end{array}
\right.
\]
{\rm Here, $\sigma_p=\sum_{j=1}^k b_j q_{p,j}$, $p=1,\ldots,l$ are the diagonal elements of $\sum_{j=1}^k{b_j Q_j}$}.
\end{enumerate}
\end{proposition}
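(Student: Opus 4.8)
The plan is to collapse the semi-infinite constraint (I) into a single linear matrix inequality via the $\mathcal{S}$-Lemma — reusing the reduction already carried out in the proof of Theorem~\ref{thm:1} with $m=1$ — and then to use the fact that the $Q_j$ are diagonal to split that LMI, coordinate by coordinate, into $l$ scalar conditions, each of which is a rotated second-order cone constraint.

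First I would repeat the computation from the proof of Theorem~\ref{thm:1}: with $\vect{u}=\mymat{c}{1\\\vect{z}}$, statement (I) is equivalent to the implication $\vect{u}^TP\vect{u}\ge 0\Rightarrow\vect{u}^TR\vect{u}\ge 0$, where $P=\mymat{cc}{r^2 & 0\\0 & -I_l}$ and $R=\mymat{cc}{d_0-\vect{a}^T\vect{x}-\theta\vect{b}^T\vect{y}_0 & \frac12(\vect{d}^T-\vect{x}^TA-\theta\vect{b}^TW)\\ \frac12(\vect{d}^T-\vect{x}^TA-\theta\vect{b}^TW)^T & -(1-\theta)\sum_{j=1}^k b_jQ_j}$. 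Since $\vect{u}_0=\mymat{c}{1\\\vect{0}}$ satisfies $\vect{u}_0^TP\vect{u}_0=r^2>0$, the $\mathcal{S}$-Lemma turns this into: there is $\lambda\ge 0$ with $R-\lambda P\succeq 0$. Because each $Q_j$ is diagonal, $\sum_{j=1}^k b_jQ_j=\mathrm{diag}(\sigma_1,\dots,\sigma_l)$, so writing $\alpha:=d_0-\vect{a}^T\vect{x}-\theta\vect{b}^T\vect{y}_0-\lambda r^2$, $\vect{\gamma}:=\vect{d}-A^T\vect{x}-\theta W^T\vect{b}\in\R^l$ and $\beta_p:=\lambda-(1-\theta)\sigma_p$, the LMI $R-\lambda P\succeq 0$ becomes $M:=\mymat{cc}{\alpha & \frac12\vect{\gamma}^T\\ \frac12\vect{\gamma} & \mathrm{diag}(\beta_1,\dots,\beta_l)}\succeq 0$.

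The core step is the decoupling claim: $M\succeq 0$ if and only if there exist $s_1,\dots,s_l\ge 0$ with $\sum_{p=1}^l s_p\le\alpha$ and $\mymat{cc}{s_p & \frac12\gamma_p\\ \frac12\gamma_p & \beta_p}\succeq 0$ for every $p$. The ``if'' direction is a one-line bound: for any $\vect{u}=(u_0,u_1,\dots,u_l)^T$, using $\alpha\ge\sum_p s_p$ one gets $\vect{u}^TM\vect{u}\ge\sum_{p=1}^l(u_0,u_p)\mymat{cc}{s_p & \frac12\gamma_p\\ \frac12\gamma_p & \beta_p}(u_0,u_p)^T\ge 0$. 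For ``only if'', $M\succeq 0$ forces $\beta_p\ge 0$ and, evaluating $\vect{u}\mapsto\vect{u}^TM\vect{u}$ at $\vect{e}_1+t\vect{e}_{p+1}$, gives $\alpha+t\gamma_p+t^2\beta_p\ge 0$ for all $t$, hence $\gamma_p=0$ whenever $\beta_p=0$; then $s_p:=\gamma_p^2/(4\beta_p)$ if $\beta_p>0$ and $s_p:=0$ otherwise makes each $2\times 2$ block PSD, while evaluating the form at $\vect{w}:=\vect{e}_1-\sum_{p:\beta_p>0}\frac{\gamma_p}{2\beta_p}\vect{e}_{p+1}$ gives $0\le\vect{w}^TM\vect{w}=\alpha-\sum_p s_p$. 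Finally, for $s_p\ge 0$ the block condition $\mymat{cc}{s_p & \frac12\gamma_p\\ \frac12\gamma_p & \beta_p}\succeq 0$ is equivalent to $\beta_p\ge 0$ and $4s_p\beta_p\ge\gamma_p^2$; adding $(s_p-\beta_p)^2$ to both sides of the latter and using $s_p+\beta_p\ge 0$ rewrites it as the SOC inequality $\norm{(\gamma_p,\,s_p-\beta_p)}\le s_p+\beta_p$. Substituting $\alpha,\vect{\gamma},\beta_p$ back in reproduces exactly the three conditions of (II), with the same $\lambda$ as above.

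The main obstacle is the ``only if'' half of the decoupling claim: the degenerate coordinates with $\beta_p=0$ must be isolated — there the naive Schur-complement quantity $\gamma_p^2/(4\beta_p)$ is undefined, so one first has to show that $M\succeq 0$ already forces $\gamma_p=0$ — and the aggregate inequality $\sum_p s_p\le\alpha$ has to be certified by the explicit test vector $\vect{w}$ above rather than by a plain Schur complement. Everything else is routine rearrangement.
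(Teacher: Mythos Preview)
Your proposal is correct and follows essentially the same route as the paper's proof: both reduce (I) to the same arrow-shaped LMI via the $\mathcal{S}$-Lemma, isolate the degenerate coordinates where $\beta_p=0$ (forcing $\gamma_p=0$), and define $s_p$ identically. The only cosmetic difference is that the paper obtains $\sum_p s_p\le\alpha$ by invoking the Schur complement on the nondegenerate principal submatrix $M_L$, whereas you extract the same inequality by evaluating the quadratic form at the explicit test vector $\vect{w}$ --- which is precisely the minimizer that the Schur complement formula encodes.
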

\begin{proof}
Following the same line of arguments as in the proof of Theorem \ref{thm:1} we can prove, using  $\mathcal{S}$-Lemma, that   {\rm (I)} is equivalent to the semi-definite inequality:
\begin{equation}\label{eq:use1}
 \exists \, \lambda \ge 0, \mbox{ such that } \mymat{cc}{d_0-\vect{a}^T\vect{x}-\theta\vect{b}^T\vect{y}_0-\lambda r^2 & \ds\frac{1}{2}(\vect{d}^T-\vect{x}^T A-\theta\vect{b}^T W) \\ \ds\frac{1}{2}(\vect{d}^T-\vect{x}^T A-\theta\vect{b}^T W)^T & \lambda I_l -(1-\theta)\ds\sum_{j=1}^k{b_j Q_j}}  \succeq 0.
\end{equation}

{\bf [${\rm (I)} \Rightarrow {\rm (II)}$]}  We now show that \eqref{eq:use1} implies {\rm (II)}.
Observe that \eqref{eq:use1}  implies that, for each $p=1,\ldots,l$ the following $(2 \times 2)$ matrix
\[
\mymat{cc}{d_0-\vect{a}^T\vect{x}-\theta\vect{b}^T\vect{y}_0-\lambda r^2 & \ds\frac{1}{2}\left(\vect{d}-A^T\vect{x} - \theta W^T\vect{b}\right)_p \\ \ds\frac{1}{2}\left(\vect{d}-A^T\vect{x} - \theta W^T\vect{b}\right)_p & \lambda -(1-\theta)\sigma_p}  \succeq 0.
\]
So, $d_0-\vect{a}^T\vect{x}-\theta\vect{b}^T\vect{y}_0-\lambda r^2 \ge 0$, for each $p=1,\ldots,l,$
$\lambda  -(1-\theta)\sigma_p \ge 0$,
and
\begin{equation}\label{eq:00}
 \left(d_0-\vect{a}^T\vect{x}-\theta\vect{b}^T\vect{y}_0-\lambda r^2\right)\left(\lambda  -(1-\theta)\sigma_p\right) \ge \left[\ds\frac{1}{2}\left(\vect{d}-A^T\vect{x} - \theta W^T\vect{b}\right)_p\right]^2.
\end{equation}
Now, define the index $L$ by
\begin{equation}\label{def:L}
L=\{p \in \{1,\ldots,l\}: \lambda  -(1-\theta)\sigma_p> 0\},
\end{equation}
%
and let
\[
s_p=\left\{\begin{array}{ccc}
    0        & \mbox{ if } & p \notin L, \\
    \displaystyle \frac{\left[\ds\left(\vect{d}-A^T\vect{x} - \theta W^T\vect{b}\right)_p\right]^2}{4(\lambda  -(1-\theta)\sigma_p)}        & \mbox{ if } & p \in L.
           \end{array}
 \right.
\]
Then it follows that,  for all $p \notin L$, $ \lambda  -(1-\theta)\sigma_p=0$ and  \eqref{eq:00} gives us that
\[
\ds\left(\vect{d}-A^T\vect{x} - \theta W^T\vect{b}\right)_p =0.
\]
So, from the construction of $s_p$, we obtain that $s_p \ge 0$, $p=1,\ldots,l$, and,
\[
\left[\ds\left(\vect{d}-A^T\vect{x} - \theta W^T\vect{b}\right)_p\right]^2 \le 4 s_p(\lambda  -(1-\theta)\sigma_p), \ p=1,\ldots,l.
\]
Using the following well-known equivalence
 \begin{equation}\label{eq:trick}
t^2 \le 4 \alpha \beta, \ \alpha,\beta \ge 0 \  \Leftrightarrow \  \|(t,\alpha-\beta)\| \le \alpha+\beta,
\end{equation}
we obtain that
 $d_0-\vect{a}^T\vect{x}-\theta\vect{b}^T\vect{y}_0-\lambda r^2 \ge 0$, and for each $p=1,\ldots,l,$
$\lambda  -(1-\theta)\sigma_p \ge 0$,
and
\[
 \left\|\left(\left(\vect{d}-A^T\vect{x}^T -W^T\theta\vect{b}\right)_p,s_p-\lambda  +(1-\theta)\sigma_p \right) \right \| \le s_p +\lambda  -(1-\theta)\sigma_p.
\]
Let $M={\rm diag}(\lambda  -(1-\theta)\sigma_1,\ldots,\lambda  -(1-\theta)\sigma_l) \in \mathbb{R}^{l \times l}$ and  $u=\vect{d}-A^T\vect{x} - \theta W^T\vect{b} \in \mathbb{R}^l$.
For the  index $L \subseteq \{1,\ldots,l\}$ defined as before, let $M_L=(M_{\alpha \beta})_{\alpha,\beta \in L}$  and $u_L=(u_\alpha)_{\alpha\in L}$.
Then, \eqref{eq:use1} gives us that
\begin{equation}\label{eq:99}
\mymat{cc}{d_0-\vect{a}^T\vect{x}-\theta\vect{b}^T\vect{y}_0-\lambda r^2 & \frac{1}{2}u_L^T \\ \frac{1}{2} u_L & M_L }\succeq 0.
\end{equation}
Note from the definition of $L$ that $M_L \succ 0$. The Schur's complement together with \eqref{eq:99} implies that
\[
\displaystyle (d_0-\vect{a}^T\vect{x}-\theta\vect{b}^T\vect{y}_0-\lambda r^2) -\frac{1}{4} u_L^T M_L^{-1} u_L \ge 0.
\]
It follows from the definitions of $M_L$ and $u_L$  that
\begin{eqnarray*}
 0 &\le & (d_0-\vect{a}^T\vect{x}-\theta\vect{b}^T\vect{y}_0-\lambda r^2) -\frac{1}{4} \sum_{p \in L} \frac{[\left(\vect{d}-A^T\vect{x}^T -W^T\theta\vect{b}\right)_p]^2}{\lambda  -(1-\theta)\sigma_p} \\
 &=&   (d_0-\vect{a}^T\vect{x}-\theta\vect{b}^T\vect{y}_0-\lambda r^2) - \sum_{p \in L} s_p \\
 &=&   (d_0-\vect{a}^T\vect{x}-\theta\vect{b}^T\vect{y}_0-\lambda r^2) - \sum_{p=1}^l s_p,
\end{eqnarray*}
where the first equality is from the definition of $s_p$, $p=1,\ldots,l$, and the last system of equalities follows from the fact that $s_p=0$ for all $p \notin L$. So, {\rm (II)} holds.

{\bf [${\rm (II)} \Rightarrow {\rm (I)}$]} Suppose that {\rm (II)} holds. Define the index $L$ as in \eqref{def:L}. The last relation in {\rm (II)} shows that
\[
\left[\ds\left(\vect{d}-A^T\vect{x} - \theta W^T\vect{b}\right)_p\right]^2 \le 4 s_p \, \left(\lambda  -(1-\theta)\sigma_p\right), \ p=1,\ldots,l.
\]
So, $u_p=\ds\left(\vect{d}-A^T\vect{x} - \theta W^T\vect{b}\right)_p=0$ for all $p \notin L$, and for all $p \in L$
\[
s_p \ge \frac{\left[\ds\left(\vect{d}-A^T\vect{x} - \theta W^T\vect{b}\right)_p\right]^2}{4(\lambda  -(1-\theta)\sigma_p)}.
\]
This together with the second relation in  {\rm (II)} gives us that
\[
d_0-\vect{a}^T\vect{x}-\theta\vect{b}^T\vect{y}_0-\lambda r^2 \ge \sum_{p=1}^l s_p \ge \sum_{p \in L} s_p \ge \sum_{p \in L}\frac{\left[\ds\left(\vect{d}-A^T\vect{x} - \theta W^T\vect{b}\right)_p\right]^2}{4(\lambda  -(1-\theta)\sigma_p)}=\frac{1}{4} u_L^T M_L^{-1} u_L,
\]
where the first equality follows by noting that $s_p \ge 0$ for all $p=1,\ldots,l$, and the last equality follows from the definitions of $M_L$ and $u_L$.
This shows that \eqref{eq:99} holds. As for all $p \notin L$, $u_p=0$ and $\lambda  -(1-\theta)\sigma_p=0$, it follows that
\begin{equation}
\mymat{cc}{d_0-\vect{a}^T\vect{x}-\theta\vect{b}^T\vect{y}_0-\lambda r^2 & \frac{1}{2}u^T \\ \frac{1}{2} u & M }\succeq 0,
\end{equation}
and so, \eqref{eq:use1} holds. Hence,  {\rm (I)} follows.
\end{proof}

%


We now associate with $(P_s)$ the following second order cone program:

{\small \begin{eqnarray*}
({P_s{\emph-QDR}}) & \displaystyle \min_{\substack{\under{\vect{x}, \, \vect{y}_0,} \, W, \, \lambda_i, \, s_{p,i}, \, \sigma_{p,i}}} & \vect{c}^T\vect{x} \\
& \mbox{s.t.} &  \lambda_i\geq 0, \, s_{p,i} \ge 0, \; i=1,\dots,m, \, p=1,\ldots,l, \\
& &       \displaystyle \sum_{p=1}^l s_{p,i} \le d_{0,i}-\vect{a}^T\vect{x}-\theta\vect{b}_i^T\vect{y}_0-\lambda_i r^2, \; i=1,\dots,m, \\
 & &        \lambda_i  -(1-\theta)\sigma_{p,i} \ge 0, \ \ \ i=1,\ldots,m, \, p=1,\ldots,l,\\
  & &     \left\|\left(\left(\vect{d}_i-A^T\vect{x} -\theta W^T\vect{b}_i\right)_p,s_{p,i}-\lambda_i  +(1-\theta)\sigma_{p,i} \right) \right \| \le s_{p,i} +\lambda_i  -(1-\theta)\sigma_{p,i},\\
  & & \ \ \ \ \ \ \ \ \ \ \ \ i=1,\ldots,m, \ p=1,\ldots,l,
\end{eqnarray*}}
where $\vect{x}\in\R^{n},\;\vect{y}_0\in\R^{k},\;W\in\R^{k\times l}, \; \lambda_i \in \R, \; s_{p,i} \in \R,\; \sigma_{p,i}\in\R,\; p=1,\dots,l, \, i=1,\ldots,m$.


Using Proposition \ref{lemma:2.3}, we now show that the problem $(P_s)$ admits an exact SOCP reformulation in the sense that the objective values of $(P_s)$ and $(P_s\mbox{-QDR})$ are equal and their constraint systems are equivalent.

\begin{theorem}[{\bf Separable QDRs and Exact SOCP Reformulations}]\label{thm:2}
Let $\theta \in [0, 1]$. Consider the linear ARO problem $(P_s)$ with the parameterized separable quadratic decision rule and its associated second order cone program $(P_s\mbox{-QDR}) $.
Then, problem $(P_s)$ and the second order cone program $(P_s\mbox{-QDR})$ are equivalent, in
the sense that, $(\vect{x},\vect{y}_0,W,Q_1,\ldots,Q_k)$  is a solution for $(P_s)$ with $Q_j={\rm diag}(q_{1,j},\ldots,q_{l,j})$, $j=1,\ldots,k$, if and only if there exists $\lambda_i, \, s_{p,i}, \, \sigma_{p,i} \ge 0$, $p=1,\ldots,l$, $i=1,\ldots,m$, such that
$(\vect{x},\vect{y}_0, W, \lambda_i, \, s_{p,i}, \, \sigma_{p,i})$  is a solution for $(P_s\mbox{-QDR})$ with
$\sigma_{p,i}=\sum_{j=1}^k (\vect{b_i})_j q_{p,j}$, $p=1,\ldots,l$, $i=1,\ldots,m$. Moreover,
$
\min (P_s) =\min (P_s\mbox{-QDR})
$

\end{theorem}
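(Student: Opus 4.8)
The plan is to derive Theorem~\ref{thm:2} from Proposition~\ref{lemma:2.3} in exactly the same way that Theorem~\ref{thm:1} was derived from the $\mathcal{S}$-Lemma. First I would observe that the constraint system of $(P_s)$, namely
\[
A(\vect{z})\vect{x} + B\left(\theta(\vect{y}_0 + W\vect{z}) + (1-\theta)\vect{z}^T \mathcal{Q}_k\vect{z}\right)\leq\vect{d}(\vect{z}),\quad \forall\vect{z}\in\ZZ,
\]
decouples into the $m$ scalar semi-infinite constraints
\[
(\vect{a}_i + A_i\vect{z})^T\vect{x} + \vect{b}_i^T\left(\theta(\vect{y}_0 + W\vect{z}) + (1-\theta)\vect{z}^T \mathcal{Q}_k \vect{z}\right) \leq d_{0,i} + \vect{d}_i^T\vect{z},\quad\forall\vect{z}\in\ZZ,
\]
one for each $i=1,\ldots,m$. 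Since each $Q_j$ is diagonal with diagonal entries $q_{1,j},\ldots,q_{l,j}$, each of these $m$ constraints is precisely of the form treated in statement (I) of Proposition~\ref{lemma:2.3}, with the data identifications $\vect{a}=\vect{a}_i$, $A=A_i$, $\vect{b}=\vect{b}_i$, $\vect{d}=\vect{d}_i$, $d_0=d_{0,i}$, and with $\sigma_p$ replaced by $\sigma_{p,i}=\sum_{j=1}^k(\vect{b}_i)_j q_{p,j}$, the diagonal elements of $\sum_{j=1}^k(\vect{b}_i)_j Q_j$.

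Next I would apply Proposition~\ref{lemma:2.3} to each index $i$ separately: for the $i$-th constraint, system (I) holds if and only if there exist $\lambda_i\ge 0$ and $s_{p,i}\ge 0$, $p=1,\ldots,l$, satisfying the three second-order cone relations of statement (II) with the $i$-th data substituted in. Collecting these equivalences over $i=1,\ldots,m$ shows that $(\vect{x},\vect{y}_0,W,Q_1,\ldots,Q_k)$ with $Q_j={\rm diag}(q_{1,j},\ldots,q_{l,j})$ is feasible for $(P_s)$ if and only if there exist $\lambda_i,s_{p,i}\ge 0$ ($p=1,\ldots,l$, $i=1,\ldots,m$) such that, setting $\sigma_{p,i}=\sum_{j=1}^k(\vect{b}_i)_j q_{p,j}$, the tuple $(\vect{x},\vect{y}_0,W,\lambda_i,s_{p,i},\sigma_{p,i})$ is feasible for $(P_s\mbox{-QDR})$. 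Conversely, given a feasible point of $(P_s\mbox{-QDR})$ with the $\sigma_{p,i}$ as specified, one recovers a feasible point of $(P_s)$ by reading off $q_{p,j}$ from any solution of the linear relations $\sigma_{p,i}=\sum_{j=1}^k(\vect{b}_i)_j q_{p,j}$ (which are satisfied by construction when we start from $(P_s\mbox{-QDR})$ with $\sigma_{p,i}$ defined in terms of given $q_{p,j}$), so the correspondence between feasible points is exactly as stated.

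Finally, since the objective $\vect{c}^T\vect{x}$ is identical in both problems and depends only on $\vect{x}$, and the feasible $\vect{x}$-components of the two problems coincide under the above correspondence, the optimal values agree: $\min(P_s)=\min(P_s\mbox{-QDR})$. This also yields the claimed solution correspondence, completing the proof.

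I do not expect a genuine obstacle here, since the argument is a bookkeeping exercise built on Proposition~\ref{lemma:2.3}. The one point requiring a little care is the direction $(P_s\mbox{-QDR})\Rightarrow(P_s)$: Proposition~\ref{lemma:2.3}(II) is phrased in terms of the scalars $\sigma_p$, whereas $(P_s)$ is phrased in terms of the matrices $Q_j$. One must be explicit that the theorem's bijection is set up precisely via $\sigma_{p,i}=\sum_{j=1}^k(\vect{b}_i)_j q_{p,j}$, so that feasibility transfers along this identification; there is no loss because the only way $Q_j$ enters the constraints of $(P_s)$ is through these weighted diagonal combinations $\sum_{j=1}^k(\vect{b}_i)_j Q_j$, exactly the quantities that Proposition~\ref{lemma:2.3} controls.
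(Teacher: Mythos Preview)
Your proposal is correct and follows essentially the same approach as the paper's own proof: decouple the constraint system of $(P_s)$ into the $m$ scalar semi-infinite constraints, apply Proposition~\ref{lemma:2.3} to each with the identifications $\vect{a}=\vect{a}_i$, $A=A_i$, $\vect{b}=\vect{b}_i$, $\vect{d}=\vect{d}_i$, $d_0=d_{0,i}$ and $\sigma_{p,i}=\sum_{j=1}^k(\vect{b}_i)_j q_{p,j}$, and conclude by noting the objective $\vect{c}^T\vect{x}$ is shared. Your additional remark on the $(P_s\mbox{-QDR})\Rightarrow(P_s)$ direction---that $Q_j$ enters only through the weighted diagonal sums controlled by Proposition~\ref{lemma:2.3}---is a helpful clarification the paper leaves implicit.
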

\begin{proof} The constraint of $(P_s)$
\[
A(\vect{z})\vect{x} + B\left(\theta(\vect{y}_0 + W\vect{z}) + (1-\theta)\vect{z}^T \mathcal{Q}_k\vect{z}\right)\leq\vect{d}(\vect{z}),\quad \forall\vect{z}\in\ZZ
\]
can be equivalently rewritten as the following system of $m$ constraints:
\[
(\vect{a}_i + A_i\vect{z})^T\vect{x} + \vect{b}_i^T\left(\theta(\vect{y}_0 + W\vect{z}) + (1-\theta)\vect{z}^T \mathcal{Q}_k \vect{z}\right) \leq d_{0,i} + \vect{d}_i^T\vect{z},\;\forall\vect{z}\in\ZZ, \; i=1,2,\ldots, m.
\]
It now follows from Proposition \ref{lemma:2.3} that, for each $i=1,2,\ldots, m$, the system
\[
(\vect{a}_i + A_i\vect{z})^T\vect{x} + \vect{b}_i^T\left(\theta(\vect{y}_0 + W\vect{z}) + (1-\theta)\vect{z}^T \mathcal{Q}_k \vect{z}\right) \leq d_{0,i} + \vect{d}_i^T\vect{z},\;\forall\vect{z}\in\ZZ
\]
 {\small is equivalent to
\begin{eqnarray*}
& &\exists \lambda_i \geq 0, s_{p,i} \ge 0, \; p=1,\ldots,l \mbox{ such that} \\
& & \left\{
\begin{array}{l}
\lambda_i\geq 0, \, s_{p,i} \ge 0, \; i=1,\dots,m, \, p=1,\ldots,l, \\
       \displaystyle \sum_{p=1}^l s_{p,i} \le d_{0,i}-\vect{a}^T\vect{x}-\theta\vect{b}_i^T\vect{y}_0-\lambda_i r^2, \;  \\
          \lambda_i  -(1-\theta)\sigma_{p,i} \ge 0, \ \ \ i=1,\ldots,m, \, p=1,\ldots,l,\\
        \left\|\left(\left(\vect{d}_i-A^T\vect{x} -\theta W^T\vect{b}_i\right)_p,s_{p,i}-\lambda_i  +(1-\theta)\sigma_{p,i} \right) \right \| \le s_{p,i} +\lambda_i  -(1-\theta)\sigma_{p,i}, \; p=1,\ldots,l,
\end{array}
\right.
\end{eqnarray*}
where,} for each $i=1,\ldots,m$, $\sigma_{p,i}$, $p=1,\ldots,l$,  are the diagonal elements of $\sum_{j=1}^k{(\vect{b_i})_j Q_j}$, that is,
$\sigma_{p,i}=\sum_{j=1}^k (\vect{b_i})_j q_{p,j}$, $p=1,\ldots,l$, $i=1,\ldots,m$.
As the objective functions of both problems $(P_s)$ and $(P_s\mbox{-QDR})$ are the same, we obtain that $(\vect{x},\vect{y}_0,W,Q_1,\ldots,Q_k)$  is a solution for $(P_s)$ if and only if there exist $\lambda_i, \, s_{p,i}, \, \sigma_{p,i} \ge 0$, $p=1,\ldots,l$, $i=1,\ldots,m$, such that
$(\vect{x},\vect{y}_0,W, \lambda_i, \, s_{p,i}, \, \sigma_{p,i})$,   is a solution for $(P_s\mbox{-QDR})$ with
$\sigma_{p,i}=\sum_{j=1}^k (\vect{b_i})_j q_{p,j}$, $p=1,\ldots,l$, $i=1,\ldots,m$, and
$\min (P_s) =\min (P_s\mbox{-QDR}).$
\end{proof}

\begin{remark}
 Note that, in the SDP formulation, the decision variable is $(\vect{x},\vect{y}_0,W,Q_1,\ldots,Q_k)$
 which is of dimension $n+k+k l + k \frac{l(l+1)}{2}$; while in the SOCP formulation, the decision variable is $(\vect{x},\vect{y}_0, W, \lambda_i, \, s_{p,i}, \, \sigma_{p,i})$ which is of dimension
 $n+k+k l + m + 2ml$. 
 
\end{remark}

\section{ARO Problems with Objective and Constraint Adjustable Variables}
In this section we establish exact conic program reformulations for the following affinely parameterized version of ARO problem $(P_0)$  with adjustable variables also in the objective function: 
\begin{mini}
{\substack{\under{\vect{x},\vect{y}(\cdot)}}}{\vect{c}^T\vect{x}+ \max_{\vect{z}\in\ZZ} \{\vect{w}^T\vect{y}(z)\}}{}{\name{$\overline{P_0}$}}
	\addConstraint{A(\vect{z})\vect{x} + B\vect{y}(\vect{z})}{\leq\vect{d}(\vect{z}),\quad}{\forall\vect{z}\in\ZZ}
\end{mini}
where  $\ZZ = \left\{\vect{z}\in\R^l : \norm{\vect{z}}^2 \leq r^2\right\}$ is ellipsoidal uncertainty set; $\vect{c}\in\R^n$; $\vect{w} \in \mathbb{R}^k$; $B=(\vect{b}_1, \ldots, \vect{b}_m)^T, \vect{b}\in\R^k $;
$A(\vect{z}) = (\vect{a}_1+A_1\vect{z}, \dots, \vect{a}_m + A_m\vect{z})^T, \vect{a}_i\in\R^{n}, A_i\in\R^{n\times l}$ and  $\vect{d}(\vect{z}) = (d_{0,1}+\vect{d}_1^T\vect{z},\dots,d_{0,m} + \vect{d}_m^T\vect{z})^T,\; d_{0,i}\in\R,\; \vect{d}_i\in\R^{l}$. The problem $(\overline{P_0})$ with the quadratic decision rule as in Definition 2.1 takes the form: 
\begin{mini}
{\substack{\under{\vect{x},\vect{y}_0},\\W,Q_j,j=1,\dots,k}}{\vect{c}^T\vect{x}+ \max_{\vect{z}\in\ZZ} \{\vect{w}^T(\theta(\vect{y}_0 + W\vect{z}) + (1-\theta)\vect{z}^T\mathcal{Q}\vect{z})\}}{}{(\overline{P})}
	\addConstraint{A(\vect{z})\vect{x} + B\left(\theta(\vect{y}_0 + W\vect{z}) + (1-\theta)\vect{z}^T \mathcal{Q}_k \vect{z}\right)}{\leq\vect{d}(\vect{z}),\quad}{\forall\vect{z}\in\ZZ.}	
\end{mini}



We associate with $(\overline{P})$ the following semi-definite program

\begingroup
\begin{mini*}
{\substack{\under{\vect{x},\vect{y}_0, \vect{\lambda}, \tau},\\W,Q_j,j=1,\dots,k}}{\vect{c}^T\vect{x} + \tau}{}{({\overline{P}\mbox{-QDR}})}
	\addConstraint{ \lambda_i\geq 0, \; i=1,\dots,m}, \mymat{ccc}{
	P_1 & &  \\
	& \ddots \\
	& & P_{m+1}
	}\succeq 0,
\end{mini*}
where $\vect{x}\in\R^{n},\;\vect{y}_0\in\R^{k},\;\tau\in\R,\; W\in\R^{k\times l},\; Q_j\in\rS_l,j=1,\dots,k$ and
\[
P_i = \left\{\begin{array}{cl}
\mymat{cc}{d_{0,i}-\vect{a}_i^T\vect{x}-\theta\vect{b}_i^T\vect{y}_0-\lambda_i r^2 & \ds\frac{1}{2}(\vect{d}_i^T-\vect{x}^T A_i-\theta\vect{b}_i^T W) \\ \ds\frac{1}{2}(\vect{d}_i^T-\vect{x}^T A_i-\theta\vect{b}_i^T W)^T & \lambda_i I_l -(1-\theta)\ds\sum_{j=1}^k{(\vect{b}_i)_j Q_j}} & \quad i=1,\ldots,m, \\
 \mymat{cc}{\tau-\theta\vect{w}^T\vect{y}_0-\lambda_{m+1} r^2 & \ds\frac{1}{2}(-\theta\vect{w}^T W) \\ \ds\frac{1}{2}(-\theta\vect{w}^T W)^T & \lambda_i I_l -(1-\theta)\ds\sum_{j=1}^k{\vect{w}_j Q_j}} & \quad i=m+1.
\end{array}
  \right.P_d
\]
\endgroup

\begin{corollary}
 Let $\theta \in [0, 1]$. Consider the linear ARO problem $(\overline{P})$  with the parameterized quadratic decision rule and its associated semi-definite program $(\overline{P}\mbox{-QDR})$ . Then, problem $(\overline{P})$ and the semi-definite program $(\overline{P}\mbox{-QDR})$ are equivalent, in
the sense that, $(\vect{x},\vect{y}_0,W,Q_1,\ldots,Q_k)$  is a solution for $(\overline{P})$ if and only if there exists $\vect{\lambda} \in \mathbb{R}^m_+$ and $\tau\in\R$ such that
$(\vect{x},\vect{y}_0,\vect{\lambda},\tau,W,Q_1,\ldots,Q_k)$  is a solution for $(\overline{P}\mbox{-QDR})$. Moreover,
$
\min (\overline{P}) = \min (\overline{P}\mbox{-QDR})
$
\end{corollary}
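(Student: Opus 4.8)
The plan is to deduce the Corollary from Theorem~\ref{thm:1} by an epigraphic reformulation of the objective. First I would introduce a scalar variable $\tau\in\R$ and observe that $(\overline{P})$ is equivalent to
\[
\min\left\{\vect{c}^T\vect{x}+\tau \ : \ A(\vect{z})\vect{x}+B\big(\theta(\vect{y}_0+W\vect{z})+(1-\theta)\vect{z}^T\mathcal{Q}_k\vect{z}\big)\le\vect{d}(\vect{z})\ \forall\vect{z}\in\ZZ,\ \ \vect{w}^T\big(\theta(\vect{y}_0+W\vect{z})+(1-\theta)\vect{z}^T\mathcal{Q}_k\vect{z}\big)\le\tau\ \forall\vect{z}\in\ZZ\right\}.
\]
This equivalence is lossless because $\ZZ$ is compact and $\vect{z}\mapsto\vect{w}^T\vect{y}(\vect{z})$ is a (continuous) polynomial, so the inner maximum in the objective of $(\overline{P})$ is attained and finite; hence $\max_{\vect{z}\in\ZZ}\{\vect{w}^T\vect{y}(\vect{z})\}$ is precisely the smallest $\tau$ satisfying the last family of constraints, and the two minimization problems share optimal solutions and optimal value.

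Next, the first family of $m$ semi-infinite inequalities is \emph{identical} to the constraint system of $(P)$, so the claim established inside the proof of Theorem~\ref{thm:1} shows that these $m$ constraints hold for $(\vect{x},\vect{y}_0,W,Q_1,\ldots,Q_k)$ if and only if there exist $\lambda_1,\ldots,\lambda_m\ge 0$ with $P_1,\ldots,P_m\succeq 0$, where $P_1,\ldots,P_m$ are exactly the blocks appearing in $(\overline{P}\mbox{-QDR})$. Then I would treat the single extra constraint $\vect{w}^T\big(\theta(\vect{y}_0+W\vect{z})+(1-\theta)\vect{z}^T\mathcal{Q}_k\vect{z}\big)\le\tau$ for all $\vect{z}\in\ZZ$ by replaying the $\mathcal{S}$-Lemma argument of Theorem~\ref{thm:1} applied to a generic inequality $(\vect{a}+A\vect{z})^T\vect{x}+\vect{b}^T\big(\theta(\vect{y}_0+W\vect{z})+(1-\theta)\vect{z}^T\mathcal{Q}_k\vect{z}\big)\le d_0+\vect{d}^T\vect{z}$, specialized to $\vect{a}=\vect{0}$, $A=0$, $\vect{b}=\vect{w}$, $\vect{d}=\vect{0}$, $d_0=\tau$. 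Writing $\vect{u}=\mymat{c}{1 \\ \vect{z}}$ this becomes the implication $\vect{u}^TP\vect{u}\ge 0\Rightarrow\vect{u}^TR_{m+1}\vect{u}\ge 0$ with $P=\mymat{cc}{r^2 & 0 \\ 0 & -I_l}$ and $R_{m+1}$ the corresponding $(l+1)\times(l+1)$ symmetric matrix; since $\vect{u}_0=\mymat{cc}{1 & \vect{0}}^T$ gives $\vect{u}_0^TP\vect{u}_0=r^2>0$, the $\mathcal{S}$-Lemma yields equivalence with $\exists\,\lambda_{m+1}\ge 0:\ R_{m+1}-\lambda_{m+1}P\succeq 0$, which is precisely $P_{m+1}\succeq 0$.

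Finally, since a block-diagonal symmetric matrix is positive semi-definite exactly when each of its diagonal blocks is, the conjunction $P_1\succeq 0,\ldots,P_{m+1}\succeq 0$ is equivalent to the single semi-definite constraint of $(\overline{P}\mbox{-QDR})$; combining this with the epigraphic reformulation, and noting that both problems carry the objective $\vect{c}^T\vect{x}+\tau$, gives the claimed equivalence of solutions (now including the auxiliary variables $\vect{\lambda}$ and $\tau$) and the equality $\min(\overline{P})=\min(\overline{P}\mbox{-QDR})$. I expect no essential difficulty here: the only point needing care is the first step, the verification that replacing $\max_{\vect{z}\in\ZZ}\{\vect{w}^T\vect{y}(\vect{z})\}$ by the epigraph variable $\tau$ constrained by a robust inequality is exact; everything afterwards is a direct invocation of Theorem~\ref{thm:1} and its proof technique.
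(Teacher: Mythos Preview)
Your proposal is correct and follows essentially the same route as the paper: an epigraphic reformulation introducing $\tau$ to move the inner maximum into an extra robust constraint, followed by the $\mathcal{S}$-Lemma argument of Theorem~\ref{thm:1} applied to each of the resulting $m+1$ constraints. If anything, your write-up is more explicit than the paper's (which simply states the epigraphic reformulation and then says ``the conclusion follows by the same line of arguments as in Theorem~\ref{thm:1}''); your justification of the epigraph step via compactness of $\ZZ$, the explicit specialization $\vect{a}=\vect{0},\,A=0,\,\vect{b}=\vect{w},\,\vect{d}=\vect{0},\,d_0=\tau$ for the $(m+1)$st block, and the block-diagonal observation are all correct and make the argument self-contained.
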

\begin{proof} The problem $(\overline{P})$ can be equivalently rewritten as
 \begin{mini}
 {\substack{\under{\vect{x},\vect{y}_0},\\W,Q_j,j=1,\dots,k, \tau}}{\vect{c}^T\vect{x}+\tau}{}{}
	\addConstraint{A(\vect{z})\vect{x} + B\left(\theta(\vect{y}_0 + W\vect{z}) + (1-\theta)\vect{z}^T \mathcal{Q}_k \vect{z}\right)}{\leq\vect{d}(\vect{z}),\quad}{\forall\vect{z}\in\ZZ,}
	\addConstraint{ \vect{w}^T(\theta(\vect{y}_0 + W\vect{z}) + (1-\theta)\vect{z}^T\mathcal{Q}_k \vect{z}) \le \tau, \quad }{\forall \vect{z} \in \mathcal{Z},}
\end{mini}
The constraints of (3.2)
can equivalently be re-written as the following system of $m+1$ constraints:
\begin{equation}\label{eq:mconstraints}
 (\vect{a}_i + A_i\vect{z})^T\vect{x} + \vect{b}_i^T\left(\theta(\vect{y}_0 + W\vect{z}) + (1-\theta)\vect{z}^T \mathcal{Q}_k \vect{z}\right) \leq d_{0,i} + \vect{d}_i^T\vect{z},\;\forall\vect{z}\in\ZZ, \; i=1,2,\ldots, m
\end{equation}
and
\[
 \vect{w}^T \left(\theta(\vect{y}_0 + W\vect{z}) + (1-\theta)\vect{z}^T \mathcal{Q}_k \vect{z}\right) \le \tau.
\]
So, the conclusion follows by the same line of arguments as in Theorem \ref{thm:1}.
\end{proof}


Now, consider the following ARO problem with separable quadratic decision rule as in Definition \ref{def2}:
\begin{mini}
	{\substack{\under{\vect{x},\vect{y}_0} \\ W, Q_j, j\in [k]_+}}{\vect{c}^T\vect{x}+ \max_{\vect{z}\in\ZZ} \{\vect{w}^T(\theta(\vect{y}_0 + W\vect{z}) + (1-\theta)\vect{z}^T\mathcal{Q}_k \vect{z})\}}{}{(\overline{P}_s)}
	\addConstraint{A(\vect{z})\vect{x} + B\left(\theta(\vect{y}_0 + W\vect{z}) + (1-\theta)\vect{z}^T \mathcal{Q}_k \vect{z}\right)}{\leq\vect{d}(\vect{z}),\quad}{\forall\vect{z}\in\ZZ,}	
\end{mini}
where the assumptions of $(\overline{P}_s)$ are the same as of $(P_s)$. We associate with $(\overline{P}_s)$ the following second order cone program:
{\small \begin{eqnarray*}
({\overline{P}_s{\emph-QDR}}) & \displaystyle \min_{\substack{\under{\vect{x}, \, \vect{y}_0,} \, W, \, \tau, \\ \, \lambda_i, \, s_{p,i}, \, \sigma_{p,i}}} & \vect{c}^T\vect{x} +\tau \\
& \mbox{s.t.} &  \lambda_i\geq 0, \, s_{p,i} \ge 0, \; i=1,\dots,m+1, \, p=1,\ldots,l, \\
& &       \displaystyle \sum_{p=1}^l s_{p,i} \le d_{0,i}-\vect{a}^T\vect{x}-\theta\vect{b}_i^T\vect{y}_0-\lambda_i r^2, \; i=1,\dots,m, \\
 & &  \sum_{p=1}^l s_{p,m+1} \le \tau-\theta\vect{w}^T\vect{y}_0-\lambda_{m+1} r^2, \\
 & &        \lambda_i  -(1-\theta)\sigma_{p,i} \ge 0, \ \ \ i=1,\ldots,m+1, \, p=1,\ldots,l,\\
  & &     \left\|\left(\left(\vect{d}_i-A^T\vect{x} -\theta W^T\vect{b}_i\right)_p,s_{p,i}-\lambda_i  +(1-\theta)\sigma_{p,i} \right) \right \|  \le s_{p,i} +\lambda_i  -(1-\theta)\sigma_{p,i}, \\
& & \ \ \ \ \ \ \ \ \ \ \ \ i=1,\ldots,m, \ p=1,\ldots,l,  \\
  & & \left\|\left((-\theta W^T\vect{w})_p,s_{p,m+1}-\lambda_{m+1}  +(1-\theta)\sigma_{p,m+1} \right) \right \| \le s_{p,m+1} +\lambda_{m+1}  -(1-\theta)\sigma_{p,m+1}, \\
  & & \ \ \ \ \ \ \ \ \ \ \ \ p=1,\ldots,l.
\end{eqnarray*}}

\begin{corollary}
 Let $\theta \in [0, 1]$. Consider the linear ARO problem  $(\overline{P})$ with the parameterized separable quadratic decision rule and its associated second order cone program $(\overline{P}_s\mbox{-QDR})$. Then, problem $(\overline{P})$ and the second order cone program
  $(\overline{P}_s\mbox{-QDR})$ are equivalent, in
the sense that,
$(\vect{x},\vect{y}_0,W,Q_1,\ldots,Q_k)$  is a solution for $(\overline{P})$ with $Q_j={\rm diag}(q_{1,j},\ldots,q_{l,j})$, $j=1,\ldots,k$, if and only if there exists $\tau,\, \lambda_i, \, s_{p,i}, \, \sigma_{p,i} \ge 0$, $p=1,\ldots,l$, $i=1,\ldots,m+1$, such that
$(\vect{x},\vect{y}_0, W, \tau, \, \lambda_i, \, s_{p,i}, \, \sigma_{p,i})$  is a solution for $(\overline{P}_s\mbox{-QDR})$ with
$\sigma_{p,i}=\sum_{j=1}^k (\vect{b_i})_j q_{p,j}$, $p=1,\ldots,l$, $i=1,\ldots,m$ and $\sigma_{p,m+1}=\sum_{j=1}^k w_j q_{p,j}$, $p=1,\ldots,l$. Moreover,
$
\min (\overline{P}_s) = \min (\overline{P}_s\mbox{-QDR}).
$
\end{corollary}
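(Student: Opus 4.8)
The plan is to reduce the statement to Proposition \ref{lemma:2.3} by the same epigraph argument already used in the preceding Corollary. First I would introduce an auxiliary variable $\tau \in \R$ and rewrite $(\overline{P}_s)$ equivalently as the problem of minimizing $\vect{c}^T\vect{x} + \tau$ over $(\vect{x},\vect{y}_0,W,Q_1,\ldots,Q_k,\tau)$ subject to the original semi-infinite constraint
\[
A(\vect{z})\vect{x} + B\left(\theta(\vect{y}_0 + W\vect{z}) + (1-\theta)\vect{z}^T \mathcal{Q}_k \vect{z}\right)\leq\vect{d}(\vect{z}),\quad \forall\vect{z}\in\ZZ,
\]
together with the extra semi-infinite constraint
\[
\vect{w}^T\left(\theta(\vect{y}_0 + W\vect{z}) + (1-\theta)\vect{z}^T\mathcal{Q}_k \vect{z}\right) \le \tau,\quad \forall \vect{z}\in\ZZ,
\]
using the elementary equivalence $\max_{\vect{z}\in\ZZ}\{\vect{w}^T\vect{y}(\vect{z})\} \le \tau \iff \vect{w}^T\vect{y}(\vect{z}) \le \tau$ for all $\vect{z}\in\ZZ$. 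This turns the feasible set into a system of $m+1$ semi-infinite inequalities, each of the form {\rm (I)} in Proposition \ref{lemma:2.3}: the first $m$ are exactly those handled in the proof of Theorem \ref{thm:2}, and I would observe that the $(m+1)$th one matches system {\rm (I)} under the substitution $\vect{a}\mapsto\vect{0}$, $A\mapsto 0$, $\vect{b}\mapsto\vect{w}$, $d_0\mapsto\tau$, $\vect{d}\mapsto\vect{0}$.

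Next I would invoke Proposition \ref{lemma:2.3} once for each of the $m+1$ constraints, keeping the hypothesis that each $Q_j={\rm diag}(q_{1,j},\ldots,q_{l,j})$ in force. For $i=1,\ldots,m$ this produces the same conic blocks of $(\overline{P}_s\mbox{-QDR})$ as in Theorem \ref{thm:2}, with $\sigma_{p,i}=\sum_{j=1}^k(\vect{b}_i)_j q_{p,j}$. For $i=m+1$, the substitution above collapses the right-hand side of {\rm (II)} to $\tau-\theta\vect{w}^T\vect{y}_0-\lambda_{m+1}r^2$ and the vector inside the norm to $(-\theta W^T\vect{w})_p$, so that {\rm (II)} reads precisely as the last two blocks of $(\overline{P}_s\mbox{-QDR})$, with $\sigma_{p,m+1}=\sum_{j=1}^k w_j q_{p,j}$. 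Combining the $m+1$ equivalences and noting that the objective of the epigraph reformulation, $\vect{c}^T\vect{x}+\tau$, is exactly the objective of $(\overline{P}_s\mbox{-QDR})$, I would conclude the claimed correspondence of solutions and the equality $\min(\overline{P}_s)=\min(\overline{P}_s\mbox{-QDR})$.

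I do not anticipate a genuine obstacle: the result is a corollary of Proposition \ref{lemma:2.3} together with the epigraph trick, exactly paralleling the previous Corollary (which does the SDP analogue) and Theorem \ref{thm:2} (which does the constraint-only SOCP case). The only point demanding care is bookkeeping — verifying that the template substitution for the $(m+1)$th constraint reproduces verbatim the $s_{p,m+1}$/$\sigma_{p,m+1}$ constraints written in $(\overline{P}_s\mbox{-QDR})$ (in particular that the terms involving $\vect{a}^T\vect{x}$ and $\vect{d}_i^T\vect{z}$ disappear), and checking that Proposition \ref{lemma:2.3} is legitimately applicable to all $m+1$ constraints, which it is because the separability hypothesis on the matrices $Q_j$ is retained throughout.
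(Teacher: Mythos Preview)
Your proposal is correct and matches the paper's own proof essentially verbatim: the paper simply rewrites $(\overline{P}_s)$ in epigraph form (as in the preceding corollary) and then invokes Proposition~\ref{lemma:2.3} on each of the resulting $m+1$ semi-infinite constraints. If anything, your write-up is more explicit about the substitution for the $(m+1)$th constraint than the paper's two-line argument.
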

\begin{proof}
As we have seen in Corollary 2.8,  the problem $(\overline{P}_s)$ can be equivalently rewritten as
 \begin{mini}
	{\substack{\under{\vect{x},\vect{y}_0, W, \tau} \\ Q_j, j=1,\dots,k}}{\vect{c}^T\vect{x} + \tau  }{}{}
	\addConstraint{A(\vect{z})\vect{x} + B(\theta(\vect{y}_0 + W\vect{z}) + (1-\theta)\vect{z}^T\mathcal{Q}_k \vect{z})(\vect{z})}{\leq\vect{d}(\vect{z}),\quad}{\forall\vect{z}\in\ZZ}
	\addConstraint{ \vect{w}^T(\theta(\vect{y}_0 + W\vect{z}) + (1-\theta)\vect{z}^T\mathcal{Q}_k \vect{z}) \le \tau, \quad }{\forall \vect{z} \in \mathcal{Z}.}
\end{mini}
Now, the conclusion follows from Proposition \ref{lemma:2.3}. \end{proof}

%
%

\begin{remark}
 We note that, our exact SDP (resp. SOCP reformulation) continues to hold in the general case where the cost vector $\vect{c}$ is also uncertain and it belongs to the norm uncertainty set $\mathcal{U}:=\{\vect{c}: \|\vect{c}-\vect{c}_0\| \le \rho\}$ for some
 $\vect{c}_0 \in \mathbb{R}^n$ and $\rho \ge 0$. Here $\|\cdot \|$ denotes a norm in $\mathbb{R}^n$. Indeed, in this case, problem $(\overline{P})$ becomes
 \begin{mini}
	{\substack{\under{\vect{x},\vect{y}_0} \\ W,Q_j,j=1,\dots,k}}{ \max_{\vect{c} \in \mathcal{U}}\vect{c}^T\vect{x}+ \max_{\vect{z}\in\ZZ} \{\vect{w}^T(\theta(\vect{y}_0 + W\vect{z}) + (1-\theta)\vect{z}^T\mathcal{Q}_k \vect{z})\}}{}{}
	\addConstraint{A(\vect{z})\vect{x} + B(\theta(\vect{y}_0 + W\vect{z}) + (1-\theta)\vect{z}^T\mathcal{Q}_k \vect{z})}{\leq\vect{d}(\vect{z}),\quad}{\forall\vect{z}\in\ZZ,}
\end{mini}
which can be further rewritten as
 \begin{mini}
	{\substack{\under{\vect{x},\vect{y}_0, \tau_1,\tau_2}\\ W, Q_j, j=1,\dots,k}}{\vect{c}^T\vect{x} + \tau_1+\tau_2  }{}{}
	\addConstraint{A(\vect{z})\vect{x} + B(\theta(\vect{y}_0 + W\vect{z}) + (1-\theta)\vect{z}^T\mathcal{Q}_k \vect{z})}{\leq\vect{d}(\vect{z}),\quad}{\forall\vect{z}\in\ZZ}
	\addConstraint{ \vect{w}^T(\theta(\vect{y}_0 + W\vect{z}) + (1-\theta)\vect{z}^T\mathcal{Q}_k \vect{z}) \le \tau_1, \quad }{\forall \vect{z} \in \mathcal{Z}}
		\addConstraint{  \vect{c}_0^T\vect{x} +\rho \|\vect{x}\| \le \tau_2. }{ }
\end{mini}
Thus, the conclusion follows by employing the same line of arguments as in the proof of the preceding two corollaries.
\end{remark}


 \section{Lot-Sizing Problem: Worst-case \& Uncertainty-Realisation Comparisons}
In the lot-sizing problem on a network, we consider $N$ stores, for which stock allocations must be determined to fulfill the demand at each store. Stock can be delivered at the beginning of the day and stored, or transported from another store at a later point in time. Let $x_i$ denote the quantity of stock to initially deliver to store $i$, with unit storage cost $c_i$. Each store can hold up to $\Gamma$ units of stock at any time. Let $y_{ij}$ denote the quantity of stock to transport from store $i$ to store $j$, with unit transportation cost $t_{ij}$. Note that $t_{ii} = 0$ and $t_{ij}$ is not necessarily equal to $t_{ji}$.

In general the demand for store $i$, denoted $z_i$, is uncertain at the beginning of the day, only known to reside in some uncertainty set $\ZZ$. Hence, we formulate the problem as a two-stage adjustable robust problem, by allowing the transportation decisions $y_{ij}$ to become wait-and-see variables. That is, an initial stock delivery $\vect{x}$ is sent to all stores at the beginning of the day, and once the demand $\vect{z}$ is revealed, the transportation decisions $y_{ij}(\vect{z}),\; i,j=1,\dots,N$ are implemented to fulfill the demand at each store. Wanting to minimize total costs, $\sum_{i=1}^{N}{c_ix_i} + \sum_{i,j=1}^{N}{t_{ij}y_{ij}(\vect{z})}$, this gives the following ARO formulation:
\begin{mini*}
	{\substack{\under{\vect{x}\in \mathcal{X}\subset\R^N, \tau\in\R,}\\\under{y_{ij}:\ZZ\subseteq\R^N\rightarrow\R,}\\i,j=1,\dots,N}}{\sum_{i=1}^{N}{c_i x_i} + \max_{\under{\vect{z}}\in\ZZ}{\left\{\sum_{i,j=1}^{N}{t_{ij}y_{ij}(\vect{z})}\right\}}}{}{\text{(LS)}}
	\addConstraint{x_i+\sum_{j=1}^{N}{y_{ji}(\vect{z})}-\sum_{j=1}^{N}{y_{ij}(\vect{z})}}{\geq z_i,\quad}{\forall \vect{z}\in\ZZ,,\quad i=1\dots N}
	\addConstraint{y_{ij}(\vect{z})}{\geq 0,\quad}{\forall \vect{z}\in\ZZ,\quad i,j=1\dots N,}
\end{mini*}
where $x_i$ are the here-and-now decisions, $\vect{x}\in\mathcal{X} = \{\vect{x}\in\R^{N} : 0\leq x_i\leq \Gamma,\; i=1,\dots,N\}$, $y_{ij}$ are the wait-and-see variables, with uncertainty set $\ZZ = \{\vect{z}\in\R^{N} : \norm{\vect{z}}^2\leq\frac{\Gamma^2}{2}\}$ (for further details, see \cite{Zhen-Motzkin-18}).

We wish to compare the solution methods of direct ADR substitution, QDR via SDP as in Corollary 3.1 and Separable QDR via SOCP as in Corollary 3.2. We will first compare their solutions after realisation of the uncertain demand $\vect{d}$, by comparison of the realised cost to the true solution given by
\begin{mini*}
	{\under{\vect{x}, y_{ij}}}{\sum_{i=1}^{N}{c_i x_i} + \sum_{i,j=1}^{N}{t_{ij} y_{ij}}}{}{\text{(TD)}\quad}
	\addConstraint{x_i + \sum_{j=1}^{N}{y_{ji}} - \sum_{j=1}^{N}{y_{ij}} \geq d_i,\quad i=1,\dots,N}
	\addConstraint{0 \leq x_i \leq \Gamma,\quad i=1,\dots,N}
	\addConstraint{y_{ij}\geq 0,\quad i,j=1,\dots,N.}
\end{mini*}

We will then compare our solution methods to the worst-case solution, given by substitution into (TD) of the worst case value for $d_i$ in $\ZZ$; namely, $d_i = \frac{\Gamma}{\sqrt{2}}$:
\begin{mini*}
	{\under{\vect{x}, y_{ij}}}{\sum_{i=1}^{N}{c_i x_i} + \sum_{i,j=1}^{N}{t_{ij} y_{ij}}}{}{\text{(WC)}\quad}
	\addConstraint{x_i + \sum_{j=1}^{N}{y_{ji}} - \sum_{j=1}^{N}{y_{ij}} \geq \frac{\Gamma}{\sqrt{2}},\quad i=1,\dots,N}
	\addConstraint{0 \leq x_i \leq \Gamma,\quad i=1,\dots,N}
	\addConstraint{y_{ij}\geq 0,\quad i,j=1,\dots,N.}
\end{mini*}

Note that the direct ADR substitution into (LS) is solved via a SOCP (see, e.g. \cite[Theorem 3.1]{7}). 

We create 50 random instances of the lot-sizing problem by generating storage and transportation costs from the uniform distribution on $[0, 1000]$. We produce a random demand $\vect{d}\in\ZZ$ for each. We then compare the methods by calculating the following \emph{percentage difference} metrics:
\begin{align*}
& m_1 = 100\cdot\frac{v-t}{v}, && m_2 = 100\cdot\frac{w-v}{w}
\end{align*}
where $v$ is the optimal (realized/worst-case) value produced by solving (LS) via the method, $t$ is the optimal value for (TD), $w$ is the optimal value for (WC), $m_1$ is a comparison metric against (TD) and $m_2$ is a comparison metric against the (WC). We also compute the average time taken to solve a single problem instance, for each of these methods. Note that the lower the calculated $m_1$ and the higher the calculated $m_2$, the better the performance of the method.

All computations were performed using a 3.2GHz Intel(R) Core(TM) i7-8700 and 16GB of RAM, equipped with MATLAB R2019B. All problem instances, being conic programs, were solved using the CVX toolbox (see, e.g. \cite{CVX-sofware14}).

\begin{table}[H]
\centering
\captionsetup{font = footnotesize}
\begin{tabu}{| l |c | c | c |}
\hline
\textbf{True Solution} & ADR via SOCP \cite{7}. & QDR via SDP & Separable QDR via SOCP\Tstrut\Bstrut\\
\hline
\hline
$N = 2$: \% Diff. & 67.7072  & 64.7297  & 64.9984	\Tstrut\Bstrut\\
\hline
\hline
$N = 3$: \% Diff. & 67.5402  & 63.9634  & 64.4285\Tstrut\Bstrut\\
\hline
\hline
$N = 4$: \% Diff. & 70.6617  & 65.9177  & 66.6341\Tstrut\Bstrut\\
\hline
\hline
$N = 5$: \% Diff. & 68.8926  & 63.6860  & 64.5545\Tstrut\Bstrut\\
\hline
\hline
$N = 8$: \% Diff. & 71.8619  & 64.1862  & 65.3614\Tstrut\Bstrut\\
\hline
\end{tabu}
\caption{
Results for the case of true solution comparison. \% Diff. represents the average percentage difference between the solution to (TD) and the realised solution for the method ($m_1$). Time is measured in seconds.
}
\end{table}

\begin{figure}[hbt!]
\centering
\begin{minipage}{.5\textwidth}
  \centering
  \includegraphics[width=\linewidth]{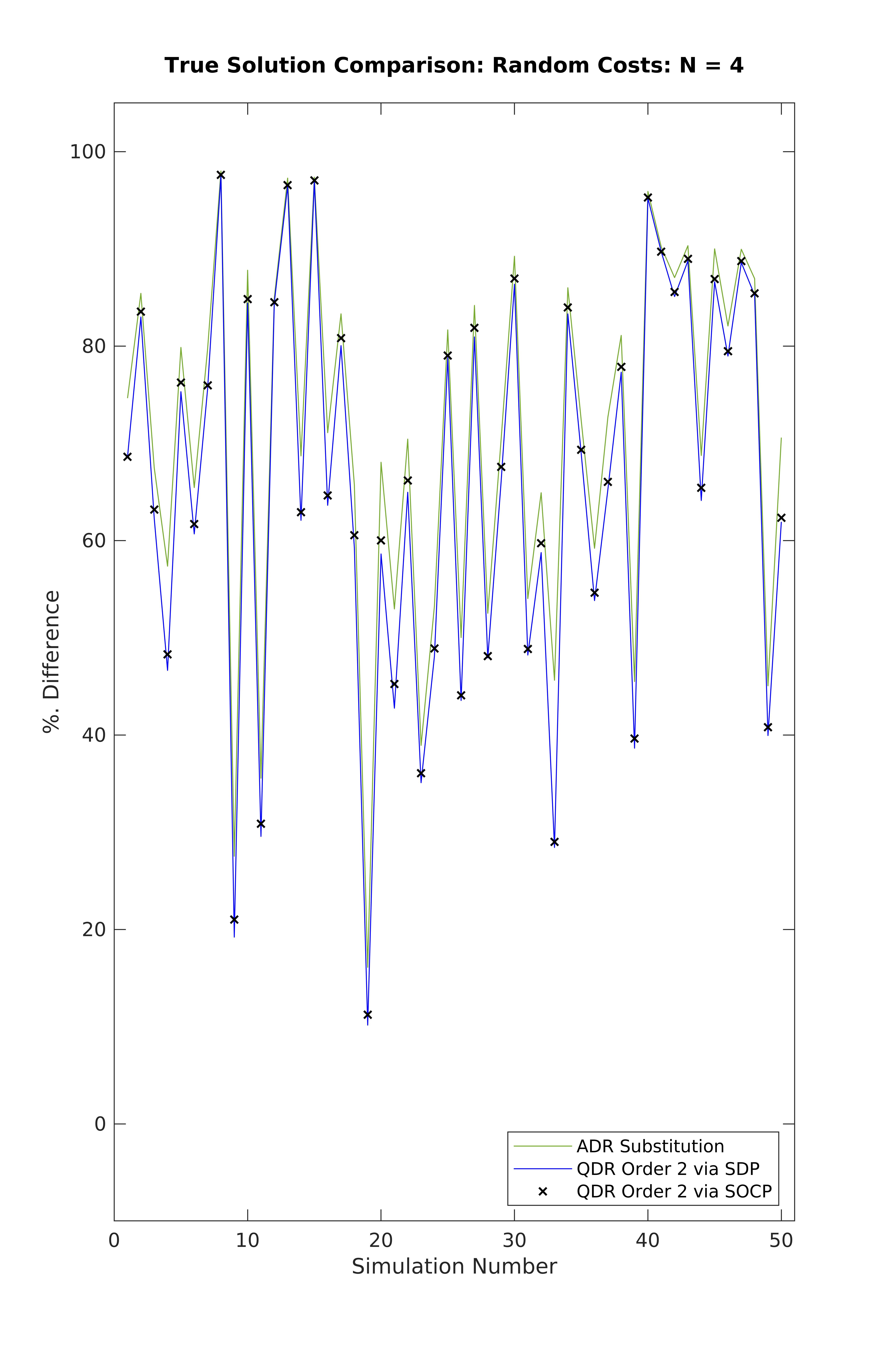}
\end{minipage}%
\begin{minipage}{.5\textwidth}
  \centering
  \includegraphics[width=\linewidth]{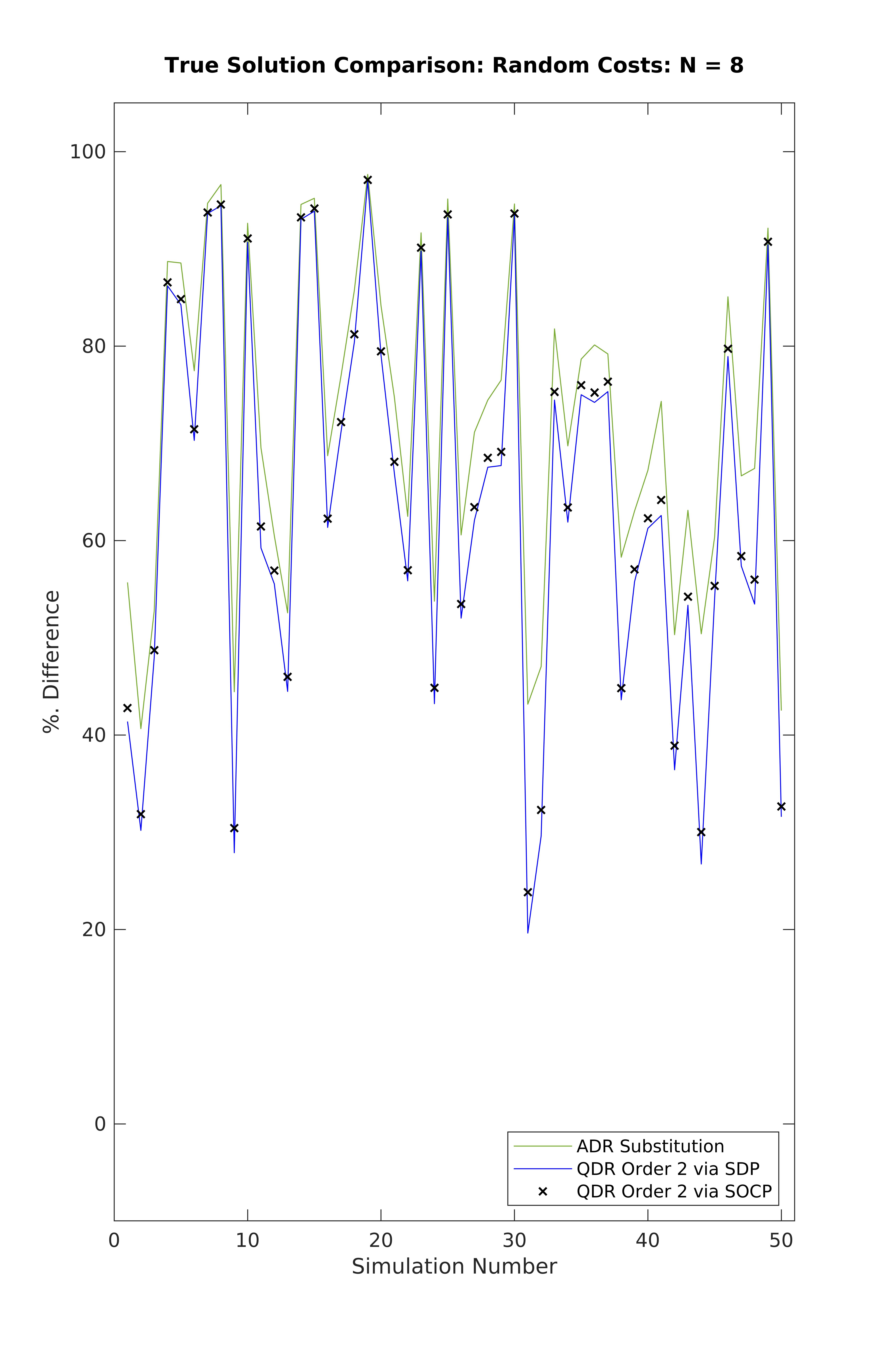}
\end{minipage}
\end{figure}

\clearpage

\begin{table}[H]
\centering
\captionsetup{font = footnotesize}
\begin{tabu}{| l |c | c | c |}
\hline
\textbf{Worst-Case} & ADR via SOCP \cite{7}. & QDR via SDP  & Separable QDR via SOCP\Tstrut\Bstrut\\
\hline
\hline
$N = 2$: \% Diff. & 17.2541  & 20.7104  & 20.6353	\Tstrut\Bstrut\\
\hline
\hline
$N = 3$: \% Diff. & 30.3298  & 36.4901  & 35.6099\Tstrut\Bstrut\\
\hline
\hline
$N = 4$: \% Diff. & 42.1128  & 49.4073 & 48.2685\Tstrut\Bstrut\\
\hline
\hline
$N = 5$: \% Diff. & 46.9065  & 55.1508  & 53.9234\Tstrut\Bstrut\\
\hline
\hline
$N = 8$: \% Diff. & 63.2694  & 71.2344  & 70.2952\Tstrut\Bstrut\\
\hline
\end{tabu}
\caption{
Results for the case of worst-case comparison and random costs. \% Diff. represents the average percentage difference between worst-case solution of (WC) and the worst-case solution for the method ($m_2$). Average Time is not presented as it is previously demonstrated in Table 1.
}
\end{table}

\begin{figure}[h]
\centering
\begin{minipage}{.5\textwidth}
  \centering
  \includegraphics[width=\linewidth]{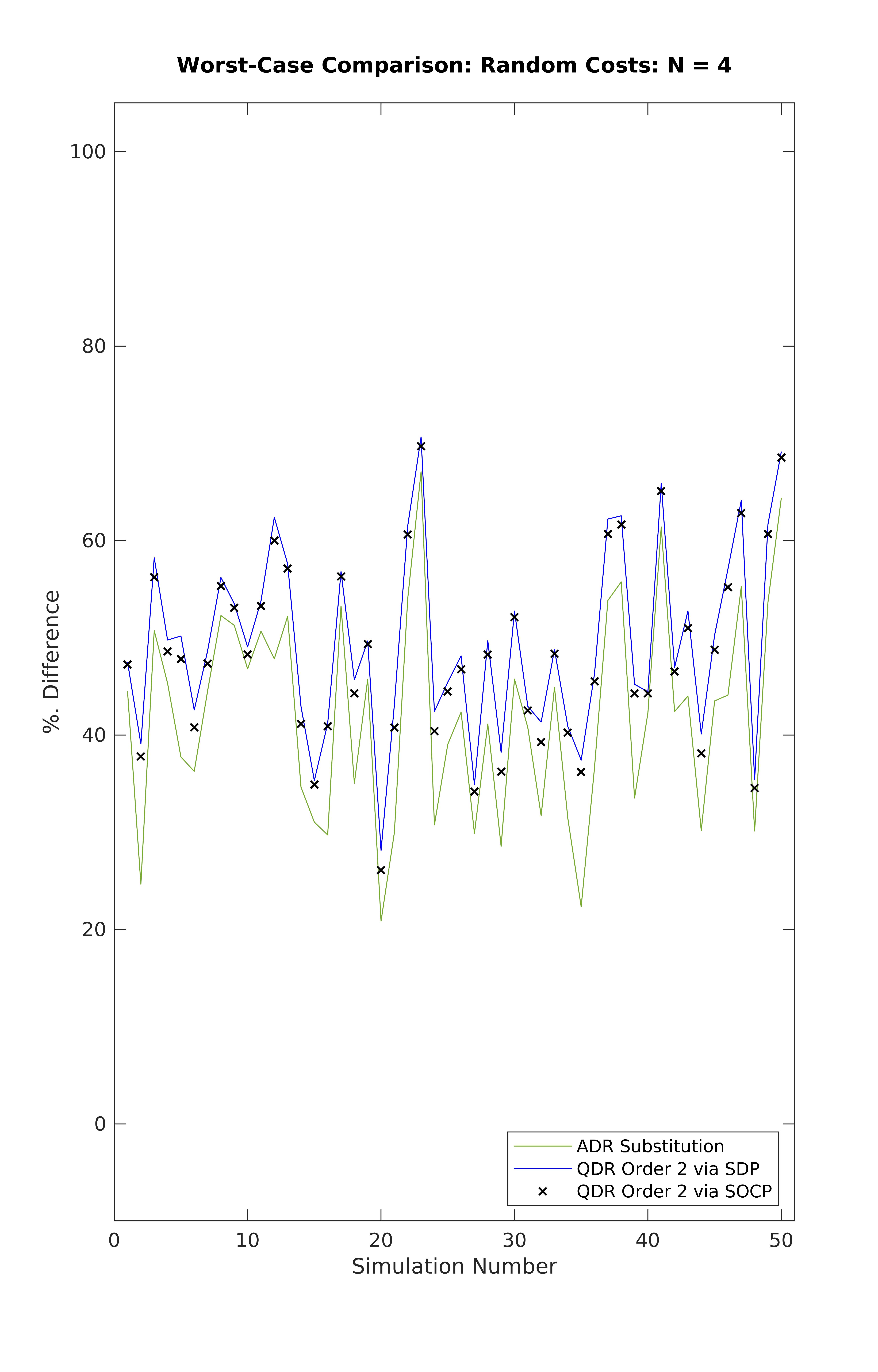}
\end{minipage}%
\begin{minipage}{.5\textwidth}
  \centering
  \includegraphics[width=\linewidth]{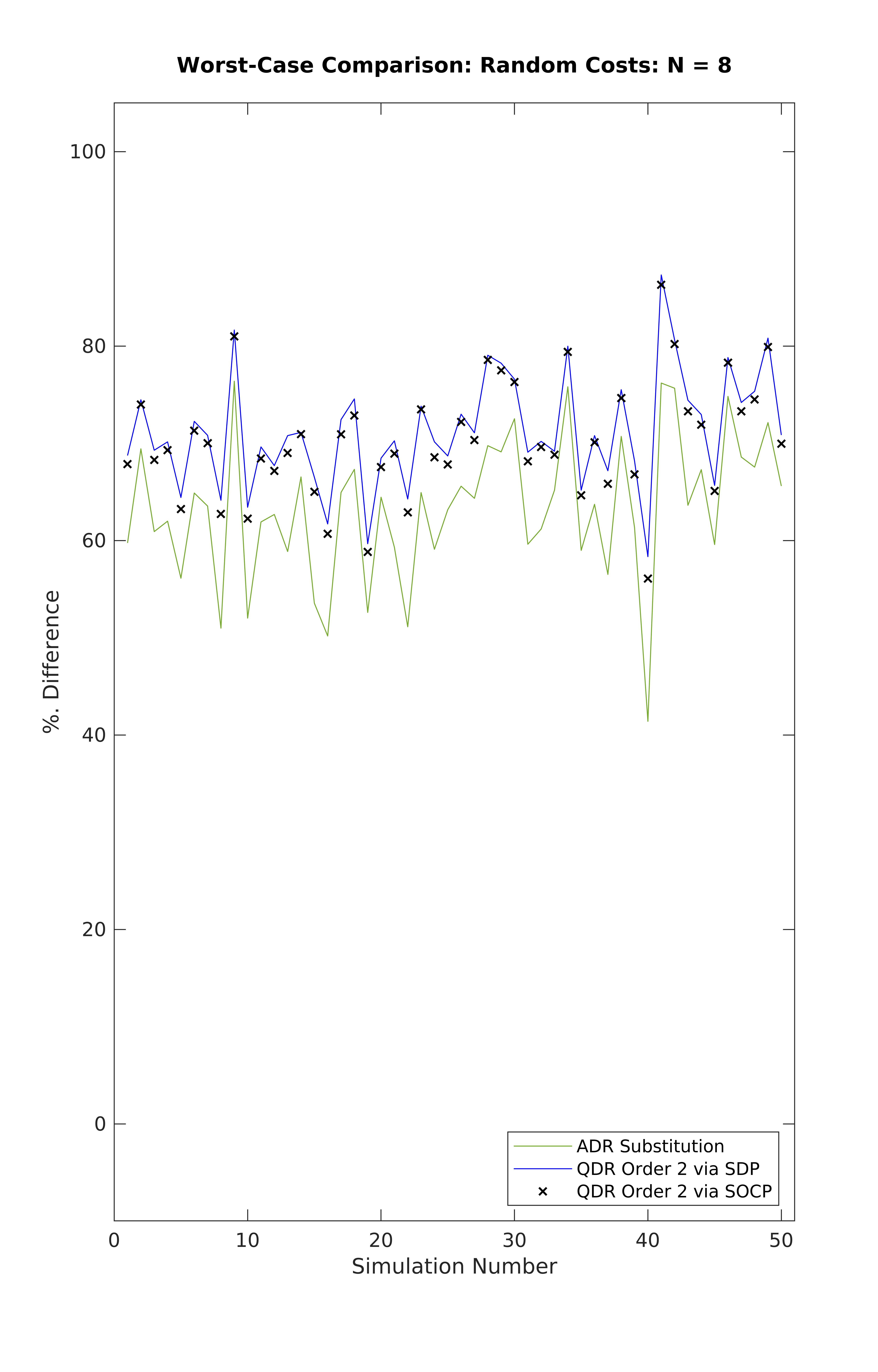}
\end{minipage}
\end{figure}

\clearpage

\begin{table}[H]
\centering
\captionsetup{font = footnotesize}
\begin{tabu}{| l |c | c | c |}
\hline
\textbf{Problem Sizes} & ADR via SOCP \cite{7}. & QDR via SDP  & Separable QDR via SOCP\Tstrut\Bstrut\\
\hline
\hline
\,\,$N = 2$: Variables: & 25  & 78  & 154 \Tstrut\Bstrut\\
\hline
\rowfont{\color{blue}}
Constraints: & 14 &  27 &  73\Tstrut\Bstrut \\
\hline
\hline
\,\,$N = 3$: Variables: & 63  & 200  & 370 \Tstrut\Bstrut\\
\hline
\rowfont{\color{blue}}
Constraints: & 25  & 84  & 186\Tstrut\Bstrut \\
\hline
\hline
\,\,$N = 4$: Variables: & 123  & 435  & 728 \Tstrut\Bstrut\\
\hline
\rowfont{\color{blue}}
Constraints: & 41  & 215  & 383 \Tstrut\Bstrut \\
\hline
\hline
\,\,$N = 5$: Variables: & 213  & 840  & 1264 \Tstrut\Bstrut\\
\hline
\rowfont{\color{blue}}
Constraints: & 61  & 468  & 688 \Tstrut\Bstrut \\
\hline
\hline
\,\,$N = 8$: Variables: & 723  & 3890  & 4813 \Tstrut\Bstrut\\
\hline
\rowfont{\color{blue}}
Constraints: & 145  & 1271  & 2322 \Tstrut\Bstrut \\
\hline
\end{tabu}
\caption{
Representation of the problem size for each method. Number of optimisation variables and number of equality constraints as outputted by CVX after solving.
}
\end{table}


%
%
%
%

Based on the numerical experiments, we can conclude then that both SDP and SOCP reformulation based methods for solving affinely parameterized ARO problems with a quadratic decision rule exceed the performance of the classical ADR approach.

With our state-of-the-art conic programming solver, we were only able to solve up to size $N=8$, as is demonstrated in Table 1. 



\section{Conclusion and Outlook}

In this paper we have shown that affinely parameterized linear adjustable robust optimization problems with a new parametric QDRs under ellipsoidal uncertainty are numerically tractable by establishing exact  semi-definite program and second order cone program reformulations.  We have also demonstrated via numerical experiments on  lot-sizing problems with uncertain demand that  these adjustable robust linear optimization problems with QDRs improve upon the affine decision rules in their performance both in the worst-case sense and after simulated realization of the uncertain demand relative to the true solution. It is of great interest to study computational tractability of adjustable robust linear optimization problems with QDRs in the presence of uncertainty sets that are expressed as the intersection of ellipsoids and will be examined in a forthcoming study.

\section{References}

\end{document}